\documentclass[11pt]{article}

\usepackage[margin=1.05in]{geometry}
\usepackage{amsmath,amssymb,amsthm,mathtools}
\usepackage{microtype}
\usepackage{enumitem}
\usepackage[hidelinks]{hyperref}
\usepackage[nameinlink,capitalize]{cleveref}

\newtheorem{theorem}{Theorem}[section]
\newtheorem{proposition}[theorem]{Proposition}
\newtheorem{lemma}[theorem]{Lemma}
\newtheorem{corollary}[theorem]{Corollary}
\newtheorem{remark}[theorem]{Remark}

\newcommand{\F}{\mathbb F}

\newcommand{\1}{\mathbf 1}

\newcommand{\eps}{\varepsilon}
\newcommand{\abs}[1]{\lvert #1\rvert}

\title{An Improved Exponent for Products of Differences over Arbitrary Finite Fields}
\author{Yao Zhi\thanks{E-mail: \texttt{ericyao2026@gmail.com}}}
\date{}

\begin{document}
\maketitle

\begin{abstract}
Murphy and Petridis proved that, for subsets $A$ of an arbitrary finite field $\F_q$, the condition $\abs A>q^{2/3-\delta}$ implies $\abs{(A-A)(A-A)}>q/2$ whenever $\delta<1/13542$ and $q$ is sufficiently large. We improve the explicit saving to $1/1689$. The proof replaces the main physical-space extraction in their large-energy branch by a common Fourier spectrum, followed by a popular-ratio Balog--Szemer\'edi--Gowers step. The resulting additive energy is treated level by level, retaining difference multiplicities and combining the Reiher--Schoen relation lemma with Mohammadi's difference--ratio estimate. This gives the structural exponent $188$ and hence
\[
\abs A\ge Cq^{375/563}=Cq^{2/3-1/1689}
\quad\Longrightarrow\quad
\abs{(A-A)(A-A)}>\frac q2
\]
for an absolute constant $C>0$.
\end{abstract}

\section{Introduction}

For $A\subseteq\F_q$, write
\[
(A-A)(A-A)=\{(a-b)(c-d):a,b,c,d\in A\}.
\]
This set is a natural four-variable manifestation of the finite-field sum--product phenomenon: additive structure enters through the two differences, while multiplication forces these additive configurations to interact multiplicatively. A foundational modern result is the theorem of Bourgain, Katz, and Tao \cite{BKT}, which gives power-saving sum--product growth in prime fields away from the very small and very large regimes. For larger sets, character-sum and incidence methods lead to explicit expansion thresholds; Hart, Iosevich, and Solymosi \cite{HIS} are an early example, while Murphy and Petridis \cite{MurphyPetridisSurvey} survey the later family of $p^{2/3}$-scale expander results.

Products of differences form one of the most concrete test cases in this circle of questions. Hart, Iosevich, and Solymosi \cite[Theorem~1.4]{HIS} proved that $\abs A>Cq^{3/4}$ implies
\[
(A-A)(A-A)=\F_q.
\]
Bennett, Hart, Iosevich, Pakianathan, and Rudnev \cite[Corollary~1.8]{BennettEtAl} lowered the scale for positive-proportion expansion: if $\abs A>Cq^{2/3}$, then $\abs{(A-A)(A-A)}\gg q$. This $q^{2/3}$ threshold is the direct predecessor of the arbitrary-field problem considered here; see also the discussion in \cite[Section~1.2]{MurphyPetridis}.

In prime fields, where there are no proper subfields, substantially stronger thresholds are available. Petridis \cite{PetridisPrime} proved that $\abs A\gg p^{5/8}$ implies
\[
\abs{(A\pm A)(A\pm A)}>\frac p2.
\]
Murphy, Petridis, Roche--Newton, Rudnev, and Shkredov \cite[Theorem~27]{MPRRS} subsequently obtained the pinned statement that $\abs A>Cp^{3/5}$ implies
\[
\abs{(A-a)(A-b)}>\frac p2
\]
for suitable $a,b\in A$. Their proof uses incidence input specific to prime fields; as explained in \cite[Section~1.2]{MurphyPetridis}, this mechanism does not extend in the same form to arbitrary finite extensions.

The arbitrary-field problem is qualitatively different because proper subfields create genuine obstructions. If $G<\F_q$ is a proper subfield and $A=G$, then $(A-A)(A-A)=G$; more generally, large pieces of subfield cosets or low-dimensional vector spaces over subfields can prevent the prime-field expansion mechanisms from applying directly. Murphy and Petridis \cite[Theorem~A]{MurphyPetridis} overcame this obstruction and, for the first time in arbitrary finite fields, crossed the exponent $2/3$: for every fixed $\delta<1/13542$, sufficiently large $q$ and every $A\subseteq\F_q$ with $\abs A>q^{2/3-\delta}$ satisfy
\[
\abs{(A-A)(A-A)}>\frac q2.
\]
Their argument combines an energy dichotomy with a structural analysis of sets for which many dilates have large additive energy, and it isolates the residual subfield structure in a form that can be handled separately.

Our main result improves the explicit saving while retaining uniformity over all finite fields.

\begin{theorem}\label{thm:main}
There is an absolute constant $C>0$ such that, for every finite field $\F_q$ and every $A\subseteq\F_q$ satisfying
\[
\abs{A}\ge Cq^{375/563},
\]
one has
\[
\abs{(A-A)(A-A)}>\frac q2.
\]
Since
\[
\frac{375}{563}=\frac23-\frac1{1689},
\]
every fixed $\delta<1/1689$ is admissible in the asymptotic formulation $\abs A>q^{2/3-\delta}$.
\end{theorem}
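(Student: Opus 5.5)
We would follow the Murphy--Petridis energy dichotomy, changing only the large-energy branch. Write $D=A-A$ and $r(x)=\abs{\{(a,b)\in A^2:a-b=x\}}$. The standard Fourier/Cauchy--Schwarz argument shows that if $\abs{(A-A)(A-A)}\le q/2$, then for many dilates $\lambda\in\F_q^*$ the sets $A$ and $\lambda A$ must have additive interaction well above the random level. Concretely, the count
\[
\sum_{\lambda}\ \#\{(a,b,c,d)\in A^4:(a-b)=\lambda(c-d)\}
\]
is controlled by $\abs{A}^4/q$ plus an error governed by $\sum_\lambda E(A,\lambda A)$.

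\emph{Small-energy branch.} If the relevant energies $E^+(A,\lambda A)$ are at most $\abs A^{3-\eps}$ for most $\lambda$, the Fourier bound already forces $\abs{(A-A)(A-A)}>q/2$ once $\abs A\ge Cq^{2/3-c\eps}$. The plan is to make this trade-off explicit: a saving $\eps$ in energy yields a saving of order $\eps$ in the exponent. The final exponent $375/563$ should then come out of balancing this branch against the next.

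\emph{Large-energy branch.} Here a set $\Lambda$ of dilates of size $\gg q\abs A^{-\eps'}$ has $E^+(A,\lambda A)\ge\abs A^{3-\eps}$. Rather than extracting a structured subset for each $\lambda$ separately, I would find a common large Fourier spectrum: pigeonhole a frequency set $S$ such that many $\lambda\in\Lambda$ have $\widehat{1_A}$ large on $S$ and $\lambda^{-1}S$ simultaneously. From this we get a popular set of ratios $(a-b)/(c-d)$. A Balog--Szemer\'edi--Gowers step applied to the popular-ratio graph then gives $A'\subseteq A$ with $\abs{A'}\ge\abs A^{1-O(\eps)}$ and a set $R$ of ratios, with $\abs R\ge q\abs A^{-O(\eps)}$, such that $E^+(A',rA')$ is large for all $r\in R$.

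\emph{Structural step and obstacle.} We would decompose $E^+(A',A')$ dyadically by the level $r(x)\sim 2^j$, keeping track of the difference multiplicities instead of discarding them. On each level we would use two inputs. The first is the Reiher--Schoen relation lemma, which converts many ratios into a multiplicative relation among few difference elements. The second is Mohammadi's difference--ratio estimate, which bounds $\abs{(A'-A')/(A'-A')}$ from below unless $A'$ is nearly a subfield coset. This dichotomy gives either a bound $E^+\le\abs{A'}^{3-1/188\cdot(\cdots)}$, contradicting the largeness, or containment of most of $A'$ in a dilate of a proper subfield $G$ with $\abs{G}\approx\abs{A}$. In the latter case $q\ge\abs G^2$ forces $\abs A\lesssim q^{1/2}$, which is a contradiction.

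The main obstacle is the level-by-level bookkeeping. We must make the exponent losses from spectrum pigeonholing, BSG, and each dyadic level add up to the structural exponent $188$ uniformly in $j$. The worst level should be the one where multiplicities are intermediate, so I would optimise the parameters at that level first and then propagate the result to obtain $\delta=1/1689$.
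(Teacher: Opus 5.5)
There is a genuine gap in your large-energy branch, in two places.

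First, after introducing a common spectrum you go back to physical space. The popular ratios you use are $(a-b)/(c-d)$, and your BSG step produces $A'\subseteq A$ with $\abs{A'}\ge\abs A^{1-O(\eps)}\approx q^{2/3}$, on which you then run the levelwise argument and Mohammadi's estimate. That estimate (\cref{prop:DR}) has two requirements your $A'$ fails:
\begin{itemize}
\item it needs $\abs B\ll q^{1/2}$;
\item its input is a simultaneous bound on $d=\abs{B-B}/\abs B$ and $r=\abs{B/B}/\abs B$, with conclusion $d^7r^4\gg\abs B$ or $d^6r^5\gg\abs B$. It does not bound $(A'-A')/(A'-A')$.
\end{itemize}
Your $A'$ is far too large and carries no bound on $\abs{A'/A'}$, only many $r$ with $E^+(A',rA')$ large.

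The paper never leaves the frequency side:
\begin{itemize}
\item The popular ratios are ratios of frequencies: $r_{S/S}(\xi)\ge Q/(4K)$ for $\xi\in X$ (\cref{lem:common-spectrum}).
\item The strong Reiher--Schoen branch, applied multiplicatively inside $S$, gives $T\subseteq S$ with $\abs{T/T}\ll K^9s^9\abs T$ (\cref{lem:firstBSG}).
\item Every $Y\subseteq S$ inherits $E^+(Y)\ge\abs Y^4/(16K^2Q)$ (\cref{lem:hereditary}).
\item All sets involved have size $\le4KQ\approx q^{1/3}$, inside Mohammadi's range.
\end{itemize}
The relation lemma is then used additively on each dense level $\Gamma_\mu$. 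It gives $U_\mu\subseteq T$ with $\abs{U_\mu-U_\mu}/\abs{U_\mu}\ll z^{-2}t^{-2}$, while ratio doubling $\ll\rho t/z$ is inherited from $T$. Summing over levels yields $N\ll\rho^4L^{28}$ or $N\ll\rho^5L^{24}$, hence $Q\ll K^{188}$. The exponent $375/563$ comes from balancing $K=\kappa Q^{1/188}$ against $K\ge6q^2/n^3$ in the small-energy branch, not from optimising a worst level.

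Second, the structured alternative is not a contradiction, so ``$q\ge\abs G^2$ forces $\abs A\lesssim q^{1/2}$'' cannot close the argument. Let $\abs G=q^{1/3}$ and let $V$ be a two-dimensional $G$-subspace of $\F_q$; then $\abs{VV}=q/2+O(q^{2/3})$. Take any $A\subseteq V$ with $\abs A\ge Cq^{375/563}$.
\begin{itemize}
\item By Cauchy--Schwarz, $D_\times(A)\ge n^8/\abs{VV}\approx2n^8/q$.
\item So $A$ lies in the large-energy branch for the paper's $K$, which satisfies $K\ge6q^2/n^3$.
\item Its spectrum contains $V^{\perp}\setminus\{0\}$, a dilate of $G^\ast$.
\item $(A-A)(A-A)\subseteq VV$ exceeds $q/2$ by only $O(q^{2/3})$.
\end{itemize}
Any correct argument must let this configuration through, and the conclusion here comes from the vector-space structure, not from a contradiction.

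In the paper the concentrated spectral piece satisfies $q^{1/4}<\abs U<q^{1/2}$, so subfields of order $q^{1/2}$ and $q^{1/3}$ genuinely survive:
\begin{itemize}
\item $\abs G=q^{1/2}$ is excluded by the Vinh-type difference--ratio bound (\cref{lem:VDR}, \cref{lem:halfexclude}).
\item $\abs G=q^{1/3}$ is converted by Fourier orthogonality, with $V=(cG)^{\perp}$, into $\abs{A\cap(a+V)}\gg q^{2/3}K^{-5}$ (\cref{lem:cubic}).
\item The theorem in that case then follows from \cite[Theorem~E]{MurphyPetridis}.
\end{itemize}
Without this branch your proof cannot be completed.
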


The proof retains the large-/small-energy dichotomy of Murphy--Petridis but reorganizes the large-energy branch around one common Fourier spectrum. Put $n=\abs A$ and
\[
D_{\times}(A)
=
\#\{(a_1,\ldots,a_8)\in A^8:(a_1-a_2)(a_3-a_4)=(a_5-a_6)(a_7-a_8)\}.
\]
Unless $D_{\times}(A)$ is already controlled by its random main term, their Theorem~B \cite[Theorem~B]{MurphyPetridis} supplies many $\xi$ for which $E^+(A,\xi A)$ is large. Instead of extracting a separate physical-space structure for each dilation parameter, we place all these energies on the same Fourier spectrum.

This common spectrum has two complementary forms of structure. Its popular multiplicative ratios allow the strong branch of the Reiher--Schoen Balog--Szemer\'edi--Gowers argument \cite[Lemma~3.1]{ReiherSchoen} to produce a large subset with small quotient set. At the same time, every subset of the spectrum inherits a quantitative additive-energy lower bound. We then decompose that additive energy by difference multiplicity. On each dense level, the Reiher--Schoen relation lemma \cite[Lemma~2.1]{ReiherSchoen} converts the restricted difference graph into a set with controlled difference doubling, while retaining the multiplicity parameter rather than discarding it in a second black-box BSG step.

The inverse input on each level is Mohammadi's arbitrary-field difference--ratio estimate \cite[Lemma~1]{Mohammadi}, which refines the sum--ratio method of Roche--Newton \cite{RocheNewton}. For a subfield-avoiding set $B\subseteq\F_q^\ast$ of size $O(q^{1/2})$, with
\[
d=\frac{\abs{B-B}}{\abs B},
\qquad
r=\frac{\abs{B/B}}{\abs B},
\]
one has
\[
d^7r^4\gg \abs B
\qquad\text{or}\qquad
d^6r^5\gg \abs B.
\]
Inserted into the levelwise argument, these two alternatives yield the structural bounds $Q\ll K^{186}$ and $Q\ll K^{188}$, where $Q=q/n$; the second is the numerical bottleneck.

If the subfield-avoidance condition fails, the resulting concentration is localized rather than treated as an error term. A difference--ratio incidence estimate obtained from Vinh's point--line theorem \cite[Theorem~3]{Vinh} excludes a subfield of order $q^{1/2}$. The remaining subfield has order $q^{1/3}$, and Fourier orthogonality transfers the frequency concentration to a coset of a two-dimensional vector space over that subfield. The vector-space result of Murphy--Petridis \cite[Theorem~E]{MurphyPetridis} then completes the structured branch.

Thus the improvement comes from three compatible ingredients: a common Fourier spectrum for all energetic dilates, a levelwise treatment of additive energy that preserves difference multiplicities, and the sharper difference--ratio exponents available in Mohammadi's arbitrary-field estimate. Section~\ref{sec:prelim} records the external inputs and the incidence estimate. Section~\ref{sec:fourier} develops the common spectrum, the levelwise energy argument, and the subfield localization. Section~\ref{sec:mainproof} balances the large- and small-$D_{\times}$ alternatives and proves \cref{thm:main}.

\section{Preliminaries and difference--ratio estimates}\label{sec:prelim}

All implicit constants in $\ll,\gg$ are absolute unless a parameter is indicated. Throughout the proof of the main theorem we fix $\eta=1/16$ in the Mohammadi estimate; consequently every constant occurring below is absolute. We shall choose the global small parameter $\kappa$ only after all structural constants have been fixed, and the constant $C$ in \cref{thm:main} only after $\kappa$ has been fixed. For finite nonempty $X,Y\subseteq\F_q$, write
\[
E^+(X,Y)=\#\{x_1+y_1=x_2+y_2:x_i\in X,\ y_i\in Y\},
\]
with $E^+(X)=E^+(X,X)$, and similarly $E^{\times}(X,Y)$ and $E^{\times}(X)$. We use $X/Y=\{x/y:x\in X,y\in Y,y\ne0\}$.

Fix a nontrivial additive character $\psi$ of $\F_q$, and use the unnormalised Fourier transform
\[
\widehat f(t)=\sum_{x\in\F_q}f(x)\psi(-tx).
\]
Thus Parseval gives $\sum_t\abs{\widehat f(t)}^2=q\sum_x\abs{f(x)}^2$.

We record the two external structural inputs used below.

\begin{lemma}[Murphy--Petridis large-energy alternative]\label{lem:MP-B}
Let $A\subseteq\F_q$, $n=\abs A$, and $K\ge1$. Suppose $n\le q/(4K)$ and
\[
D_{\times}(A)\ge \frac{n^8}{q}+\frac{3qn^5}{K}.
\]
Then there is $X\subseteq\F_q^{\ast}$ such that
\begin{equation}\label{eq:MP-energy}
E^+(A,\xi A)\ge \frac{n^3}{K}
\qquad (\xi\in X),
\end{equation}
and
\begin{equation}\label{eq:Xlower}
\abs X\ge \frac{q}{2Kn}.
\end{equation}
\end{lemma}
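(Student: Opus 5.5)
The plan is to express $D_{\times}(A)$ as a sum of additive energies of dilates and then pigeonhole. For $\lambda\in\F_q$ let $r(\lambda)=\#\{(a_1,a_2,a_3,a_4)\in A^4:(a_1-a_2)(a_3-a_4)=\lambda\}$, so that $D_{\times}(A)=\sum_\lambda r(\lambda)^2$. The solutions with some factor equal to zero (that is, $\lambda=0$) number $r(0)^2\le (2n^3)^2=4n^6$. For the solutions with $\lambda\neq0$, write $x=a_1-a_2$, $y=a_3-a_4$, $u=a_5-a_6$, $v=a_7-a_8$, all nonzero, with $xy=uv$. Set $\xi=x/u=v/y\in\F_q^\ast$. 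Then the relation becomes $a_1-a_2=\xi(a_5-a_6)$ together with $a_7-a_8=\xi(a_3-a_4)$. For each fixed $\xi$, the number of quadruples with $a_1-a_2=\xi(a_5-a_6)\neq0$ is $E^+(A,\xi A)-n^2$, since $a_1+\xi a_6=a_2+\xi a_5$ and the diagonal contributes $n^2$. Each nonzero solution corresponds to exactly one $\xi$. This gives the identity
\[
D_{\times}(A)=r(0)^2+\sum_{\xi\ne0}\bigl(E^+(A,\xi A)-n^2\bigr)^2 .
\]

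Next I bound the contribution of small energies. Let $X=\{\xi\neq0:E^+(A,\xi A)\ge n^3/K\}$. For $\xi\notin X$ the summand is at most $(n^3/K)\,E^+(A,\xi A)$. I then use the standard identity $\sum_{\xi\neq 0}E^+(A,\xi A)$, which equals $n^2(q-1)$ plus the number of pairs $(a_1-a_2,\,a_3-a_4)$ of nonzero differences, i.e.\ at most $qn^2+n^4$. Hence the contribution from $\xi\notin X$ is at most $(n^3/K)(qn^2+n^4)=qn^5/K+n^7/K$. Since $n\le q/(4K)$, the second term is $\le qn^5/K$ with room to spare ($n^2\le q$ would even suffice). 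In the same way $4n^6\le qn^5/K$ because $n\le q/(4K)$. So the zero and small-energy terms together total at most $3qn^5/K$, and I would arrange the counting so that this constant $3$ is achieved, perhaps by bounding more crudely.

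It follows that $\sum_{\xi\in X}E^+(A,\xi A)^2\ge n^8/q$, with the remainder absorbed into the displayed hypothesis. To obtain the size bound \eqref{eq:Xlower}, I use the trivial bound $E^+(A,\xi A)\le n^3$. This alone only gives $\abs X\ge n^2/q$, which is too weak. The key step, and the one I expect to be the main obstacle, is to use the hypothesis's extra $3qn^5/K$ more cleverly. I would split $X$ at a higher threshold, or rather argue directly: the total over $X$ must be at least $D_{\times}-(\text{small part})$. Using $\sum_{\xi\in X}E^2\le n^3\sum_{\xi\in X}E$ together with the fact that the mean of $E^+$ over $\xi$ is about $n^4/q$, one sees that energies near the random level $n^4/q$ contribute at most roughly $n^8/q$.

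More precisely, the plan is to redo the computation with $E^+-n^2-n^4/q$ centred via Fourier analysis, writing $E^+(A,\xi A)=q^{-1}\sum_t\abs{\widehat{1_A}(t)}^2\abs{\widehat{1_A}(\xi t)}^2$. The $t=0$ term gives $n^4/q$. Then $\sum_\xi(E^+-n^4/q)^2$ is controlled, so that the excess $3qn^5/K$ beyond $n^8/q$ must come from $X$. Each element of $X$ contributes at most $n^3\cdot n^3=n^6$, but with this centring the useful contribution per element is $\le n^3\cdot(\text{excess})$, and this yields $\abs X\cdot n^3\cdot 2n^3\ge qn^5/K$ after cancellation. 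That is exactly $\abs X\ge q/(2Kn)$. Making these constants match is the delicate bookkeeping step.
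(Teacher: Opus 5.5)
Your identity $D_{\times}(A)=r(0)^2+\sum_{\xi\ne0}\bigl(E^+(A,\xi A)-n^2\bigr)^2$ and your bound $r(0)^2\le 4n^6\le qn^5/K$ are correct, but the argument breaks at the next step. The inequality $n^7/K\le qn^5/K$ is equivalent to $n\le q^{1/2}$. It does not follow from $n\le q/(4K)$, and it is false exactly where the lemma is used ($n$ near $q^{2/3}$). The loss is structural, not a matter of constants. The $n^4$ in $\sum_{\xi\ne0}E^+(A,\xi A)\le qn^2+n^4$ is the sum of the random main terms $\approx n^4/q$. Bounding their squares by $(n^3/K)(n^4/q)$ rather than by $n^8/q^2$ overcharges the main term $n^8/q$ by the factor $q/(Kn)\ge4$. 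So even your intermediate claim $\sum_{\xi\in X}E^+(A,\xi A)^2\ge n^8/q$ is unproved. The step that is supposed to give $\abs X\ge q/(2Kn)$ (the \emph{useful contribution per element} and the inequality $\abs X\cdot n^3\cdot 2n^3\ge qn^5/K$ \emph{after cancellation}) is not an argument.

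The repair is the centring you mention at the end, but the estimate that makes it work is missing. Write $E^+(A,\xi A)=n^4/q+h(\xi)$ with $h(\xi)=q^{-1}\sum_{t\ne0}F(t)F(\xi t)$ and $F=\abs{\widehat{\1_A}}^2$. Then $0\le h(\xi)\le E^+(A,\xi A)\le n^3$. Since $\xi t$ runs over $\F_q^{\ast}$ as $\xi$ does, Parseval gives the exact identity $\sum_{\xi\ne0}h(\xi)=(qn-n^2)^2/q\le qn^2$. This is what replaces your lossy $qn^2+n^4$. Now use $0\le E^+(A,\xi A)-n^2\le E^+(A,\xi A)$, the bound $h^2\le (n^3/K)h$ off $X$, and $h^2\le n^6$ on $X$:
\[
D_{\times}(A)\le 4n^6+\frac{n^8}{q}+\frac{2n^4}{q}\sum_{\xi\ne0}h(\xi)+\sum_{\xi\ne0}h(\xi)^2
\le \frac{n^8}{q}+6n^6+\frac{qn^5}{K}+\abs X n^6 .
\]
As $6n^6\le 3qn^5/(2K)$ by $n\le q/(4K)$, the hypothesis yields $\abs X n^6\ge qn^5/(2K)$. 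This is exactly \eqref{eq:Xlower}, with $X\subseteq\F_q^{\ast}$ automatically.

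Completed this way, your route would be a self-contained proof of the statement. The paper instead quotes \cite[Theorem~B]{MurphyPetridis}, which supplies a set of size at least $q/(Kn)$ that may contain $0$. It uses $n\le q/(4K)$ only to discard $0$, at the cost of the factor $2$.
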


\begin{proof}
\cite[Theorem~B]{MurphyPetridis} produces a set $X_0\subseteq\F_q$ satisfying \eqref{eq:MP-energy} and $\abs{X_0}\ge q/(Kn)$.  We discard $0$ if it occurs.  Since the hypothesis $n\le q/(4K)$ gives $q/(Kn)\ge4$, the resulting set $X=X_0\setminus\{0\}$ satisfies $\abs X\ge q/(Kn)-1\ge q/(2Kn)$.  This yields \eqref{eq:Xlower} while ensuring that every dilation parameter is nonzero.
\end{proof}

\begin{lemma}[Reiher--Schoen energy and relation lemmas]\label{lem:RS}
Let $Y$ be a finite subset of an abelian group and suppose $E(Y)\ge \abs Y^3/L$ with $L\ge1$. For each fixed $\eps\in(0,1/2)$ there is $Y'\subseteq Y$ with
\[
\abs{Y'}\ge (1-\eps)L^{-1/2}\abs Y,
\qquad
\abs{Y'-Y'}\ll_{\eps}L^4\abs{Y'}.
\]
Moreover, in the popular-difference branch of their proof, where the relevant differences satisfy $r_{Y-Y}(d)\ge L^{-1/2}\abs Y$ and carry a fixed positive proportion of the energy, one obtains the stronger bound
\[
\abs{Y'-Y'}\ll_{\eps}L^3\abs{Y'}.
\]
Finally, if $R\subseteq Y^2$ has density $\delta$, then their relation lemma produces $Y'\subseteq Y$ with $\abs{Y'}\gg \delta\abs Y$ such that, for every $(y_1,y_2)\in (Y')^2$, there are $\gg \delta^4\abs Y^2\abs{Y'}$ triples $(x,b,z)\in Y^3$ for which $(y_1,x),(b,x),(b,z),(y_2,z)\in R$.
\end{lemma}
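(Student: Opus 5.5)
This lemma packages results from Reiher--Schoen rather than proving something new. The plan is therefore to reconstruct the relevant parts of their argument in the notation used here and check that the stated exponents come out.

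\textbf{The relation lemma.} I would start here, since the other two claims rest on it. Fix $R\subseteq Y^2$ with $\abs R=\delta\abs Y^2$ and view $R$ as a bipartite graph between two copies of $Y$.
\begin{enumerate}[label=(\roman*)]
\item Apply the standard dependent-random-choice argument. Pick a random vertex $x$ and let $N(x)=\{y:(y,x)\in R\}$. By Cauchy--Schwarz, $\mathbb E\abs{N(x)}=\delta\abs Y$.
\item Call a pair $(y_1,y_2)$ bad if its number of common neighbours is at most $c\delta^2\abs Y$. The expected number of bad pairs inside $N(x)$ is then small compared with $\mathbb E\abs{N(x)}^2$.
\item Choose $x$ with $\abs{N(x)}^2-C\cdot\#\{\text{bad pairs in }N(x)\}\gg\delta^2\abs Y^2$. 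Delete every vertex of $N(x)$ that lies in more than a small fraction of bad pairs, and let $Y'$ be what remains, so $\abs{Y'}\gg\delta\abs Y$.
\item For $y_1,y_2\in Y'$, most $b\in Y'$ are good partners for both. This gives $\gg\abs{Y'}$ choices of $b$, and for each such $b$ there are $\gg\delta^2\abs Y$ choices each of $x$ and $z$, with $(y_1,x),(b,x),(b,z),(y_2,z)\in R$.
\item Counting $b$ over $Y$ rather than $Y'$ then yields $\gg\delta^4\abs Y^2\abs{Y'}$ triples.
\end{enumerate}
I must be careful that the exponent bookkeeping lands exactly on $\delta^4\abs Y^2\abs{Y'}$, using $\abs{Y'}\gg\delta\abs Y$ to convert between normalisations.

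\textbf{The energy statements.} Here I would follow Reiher--Schoen's proof of their Lemma~3.1.
\begin{enumerate}[label=(\roman*)]
\item Define popular differences $P=\{d:r_{Y-Y}(d)\ge L^{-1/2}\abs Y/2\}$. Unpopular differences contribute at most $\tfrac12\abs Y^3/L$ to the energy, so popular ones carry at least half of it.
\item Let $R=\{(y_1,y_2):y_1-y_2\in P\}$. The density of $R$ is $\gg L^{-1/2}$ up to $\eps$-losses; the precise size $(1-\eps)L^{-1/2}\abs Y$ of $Y'$ comes from their sharper neighbourhood choice.
\item Each $y_1-y_2$ with $y_i\in Y'$ can be written as $(y_1-x)-(b-x)+(b-z)-(z-y_2)$, a signed sum of four popular differences.
\item Each popular difference has at least $L^{-1/2}\abs Y$ representations. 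Double counting in Pl\"unnecke style then gives $\abs{Y'-Y'}\cdot(\text{many paths})\le\abs Y^{O(1)}$.
\item Balancing the powers of $L$ gives the $L^3$ bound in the popular branch and the $L^4$ bound in general. The general bound comes from first restricting to a dyadic level of the energy, which costs one extra power of $L$.
\end{enumerate}

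\textbf{Main obstacle.} The hardest part is matching the exact exponents $3$ and $4$ and the factor $(1-\eps)L^{-1/2}$. These rely on Reiher--Schoen's refined counting, not on the crude BSG constants, so at that point I would cite \cite[Lemmas~2.1, 3.1]{ReiherSchoen} directly and verify only that their hypotheses match ours.
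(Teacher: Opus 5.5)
The paper proves this lemma purely by citation: the general statement is \cite[Theorem~1.2]{ReiherSchoen}, the $L^3$ bound is \cite[Lemma~3.1]{ReiherSchoen} (used only when its hypothesis holds), and the relation statement is \cite[Lemma~2.1]{ReiherSchoen}. Your dependent-random-choice sketch of the relation lemma is fine. Each pair $y_1,y_2\in Y'$ has $\gg\abs{Y'}$ common good partners $b\in Y'$, and each such $b$ gives $\gg(\delta^2\abs Y)^2$ pairs $(x,z)$, which already yields $\delta^4\abs Y^2\abs{Y'}$; your step about counting $b$ over $Y$ is unnecessary. Also, $\mathbb E\abs{N(x)}=\delta\abs Y$ is an identity; Cauchy--Schwarz is what gives $\mathbb E\abs{N(x)}^2\ge\delta^2\abs Y^2$. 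The gap lies in your reconstruction of the energy statements, which is your only route to the first, unconditional $L^4$ assertion. Your fallback citation lists only Lemmas~2.1 and~3.1, and Lemma~3.1 requires the popular differences to carry a positive proportion of the energy.

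\textbf{Step (i) is false.} Differences with $r_{Y-Y}(d)<\tau$ contribute at most $\tau\sum_d r_{Y-Y}(d)=\tau\abs Y^2$. For $\tau=L^{-1/2}\abs Y/2$ this is $\abs Y^3/(2\sqrt L)$, not $\abs Y^3/(2L)$; the threshold that secures half the energy is $\abs Y/(2L)$. Concretely, take $L>4$ and a random subset of density $1/L$ in an abelian group of order $L\abs Y$. Then $E(Y)\approx\abs Y^3/L$, but every nonzero difference has multiplicity about $\abs Y/L<L^{-1/2}\abs Y/2$. So only $d=0$ is popular, and it carries a negligible $\abs Y^2$. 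This is exactly why the lemma states the $L^3$ bound only under the extra hypothesis, and why \cref{prop:energy-form} treats the weak differences $W$ by a separate levelwise argument. If (i) held, the $L^3$ bound would be unconditional.

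\textbf{Step (ii) is also wrong.} Even when popular differences do carry $\gg\abs Y^3/L$ energy, you only get $\sum_{d\in P}r_{Y-Y}(d)\ge\abs Y^{-1}\sum_{d\in P}r_{Y-Y}(d)^2\gg\abs Y^2/L$. So the relation $\{(y_1,y_2):y_1-y_2\in P\}$ is only guaranteed density $\gg1/L$, not $\gg L^{-1/2}$. For example, a subgroup of size $L^{-1/3}\abs Y$ inside $Y$ plus a generic remainder gives density about $L^{-2/3}$. Dependent random choice on this relation therefore guarantees only $\abs{Y'}\gg\abs Y/L$, not $(1-\eps)L^{-1/2}\abs Y$.

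Step (v)'s ``one extra power of $L$ from a dyadic level'' is an unsupported guess. To close the gap, cite \cite[Theorem~1.2]{ReiherSchoen} for the first assertion, as the paper does. Treat the popular-energy condition as a hypothesis of the $L^3$ statement, not something you can derive.
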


\begin{proof}
The first assertion is \cite[Theorem 1.2]{ReiherSchoen}, the strong popular branch is \cite[Lemma 3.1]{ReiherSchoen}, and the relation statement is \cite[Lemma 2.1]{ReiherSchoen}. We shall use the quantitative consequence of the latter explicitly in the proof of \cref{prop:energy-form}.
\end{proof}

The next lemma is the arbitrary-field inverse estimate needed by the energy-level argument. It is a published consequence of Roche--Newton's ratio-set method. Mohammadi explicitly observes that the covering argument may be run with $A-A$ in place of $A+A$ and records the resulting difference--ratio bounds; see \cite[Lemma 1 and its proof]{Mohammadi}.

\begin{lemma}[Mohammadi difference--ratio estimate]\label{prop:DR}
Fix $0<\eta<1/8$. Let $B\subseteq\F_q^{\ast}$ satisfy $\abs B\ll q^{1/2}$ and suppose that, for every proper subfield $G<\F_q$ and every $c\in\F_q$,
\begin{equation}\label{eq:subfieldavoid}
\abs{B\cap cG}\le \max\left\{\abs G^{1/2},\eta\abs B\right\}.
\end{equation}
Put $N=\abs B$, $D=\abs{B-B}$ and $R=\abs{B/B}$. Then
\begin{equation}\label{eq:DR}
D^7R^4\gg_{\eta} N^{12}
\qquad\text{or}\qquad
D^6R^5\gg_{\eta} N^{12}.
\end{equation}
Equivalently, with $d=D/N$ and $r=R/N$,
\[
d^7r^4\gg_{\eta}N
\qquad\text{or}\qquad
d^6r^5\gg_{\eta}N.
\]
\end{lemma}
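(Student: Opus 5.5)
Since the paper itself says this is Mohammadi's Lemma 1, the natural proof is to reproduce the Roche--Newton covering argument for ratio sets with $B-B$ in place of $B+B$, tracking exponents. The plan has three steps.

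\emph{Step 1: a ratio outside $B/B$.} First we show that there is $\lambda\in\F_q$ with $\lambda\notin B/B$ and such that $B+\lambda B$ is large. We consider $B/B$ and the subfield (or ring) it generates. If $B/B$ were closed under addition and multiplication, $\F_q$-linear combinations would live inside a subfield $G$. In that case $B$ would lie, up to dilation, in a coset $cG$, violating \eqref{eq:subfieldavoid} once $\abs B>\abs G^{1/2}$ and $\eta<1/8$. The case $\abs B\le \abs G^{1/2}$ is handled by the size hypothesis $\abs B\ll q^{1/2}$ together with $G$ proper.

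Hence one of the following exists:
\[
\lambda=\frac{a-b}{c-d}+\frac{e}{f}\notin B/B,
\qquad
\lambda=\frac{a-b}{c-d}\cdot\frac{e}{f}\notin B/B,
\qquad
\lambda=1+\frac{e}{f}\notin B/B,
\]
with $a,\dots,f\in B$. The two alternatives in \eqref{eq:DR} correspond to the additive and multiplicative escape cases, which differ in how many copies of $B/B$ the covering argument has to absorb.

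\emph{Step 2: injectivity of the map $(x,y)\mapsto x+\lambda y$.} For $\lambda\notin B/B$ this map is injective on $B\times B$. This holds because $x+\lambda y=x'+\lambda y'$ forces $\lambda=(x'-x)/(y-y')$, and such a quotient lies in $(B-B)/(B-B)$. We therefore need $\lambda\notin (B-B)/(B-B)$ rather than just $\lambda\notin B/B$. Alternatively, we can pass through popular subsets so as to get $\abs{B+\lambda B}\gg N^2$ after a Cauchy--Schwarz/energy argument. Concretely, the number of solutions is controlled by $E^+(B,\lambda B)$, which is small when $\lambda$ is not a popular ratio of differences.

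\emph{Step 3: Plünnecke covering upper bound.} Next we bound $\abs{B+\lambda B}$ from above by expanding $\lambda$. We write $\lambda B\subseteq \frac{a-b}{c-d}B+\frac{e}{f}B$ and use Ruzsa covering in the multiplicative group: $B$ is covered by $\ll R$ dilates of a popular subset $B'\subseteq B$, and we use Plünnecke--Ruzsa for differences (with $\abs{B-B}=dN$). This gives
\[
\abs{B'+\lambda B'}\ll d^{a}r^{b}N
\]
with $(a,b)=(7,4)$ or $(6,5)$ depending on the escape case. We first pass to a large $B'\subseteq B$ (via Katz--Shen style covering) of size $\gg N/r^{O(1)}$ so that $B'/B'$ has few representations. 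Comparing this upper bound with the lower bound $\gg\abs{B'}^2$ from Step 2 yields $N\ll_\eta d^a r^b$.

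\emph{Main obstacle.} The delicate step is Step 1 combined with Step 2. We must ensure that the escaping $\lambda$ is found inside the right refined subset $B'$, so that the subfield-avoidance hypothesis still applies after passing to $B'$; this is where $\eta$ and the $\max$ in \eqref{eq:subfieldavoid} enter. At the same time, the exponent bookkeeping in Plünnecke must produce exactly $(7,4)$ and $(6,5)$. Following Mohammadi, we would simply cite \cite[Lemma 1 and its proof]{Mohammadi} for this accounting, after verifying that the replacement of $B+B$ by $B-B$ does not affect the covering steps.
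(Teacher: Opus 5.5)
In the end your proof rests on the same thing the paper's proof does: a direct appeal to \cite[Lemma~1]{Mohammadi}. The paper applies that lemma with the constant $C=1$ in its subfield hypothesis, so that the hypothesis is exactly \eqref{eq:subfieldavoid}, and reads off the difference-set form, since the lemma is already stated for $B\pm B$. So the check you propose, that replacing $B+B$ by $B-B$ does not disturb the covering steps, is unnecessary, and the citation alone is a complete proof.

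You should not, however, offer Steps 1--3 as an argument, because as written they do not work.

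\emph{Step 1.} You seek $\lambda\notin B/B$ via closure of $B/B$ under addition and multiplication. But $B/B$ is never additively closed: a nonempty additively closed subset of $\F_q$ is a subgroup, hence contains $0$, while $0\notin B/B$ because $B\subseteq\F_q^{\ast}$. So this dichotomy is vacuous. Moreover, an element outside $B/B$ is useless for Step 2, which, as you note yourself, needs $\lambda\notin R(B):=(B-B)/(B-B)$. The dichotomy must be run for $R(B)$ (in the real argument, for a refined subset). Either $R(B)$ is closed under addition and under multiplication by elements of $B/B$, in which case it is a vector space over the field $G$ generated by $B/B$ and $B$ lies in a dilate $bG$. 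Or some element obtained from $R(B)$ and $B/B$ by one addition or one multiplication escapes $R(B)$.

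\emph{The small case.} Your dismissal of the case $\abs B\le\abs G^{1/2}$ via $\abs B\ll q^{1/2}$ and properness of $G$ is not an argument. A set $B\subseteq bG$ with $\abs B\le\abs G^{1/2}$ does not violate \eqref{eq:subfieldavoid}, so this case needs separate treatment. For example, here $R(B)=G$, and averaging $E^+(B,\xi B)$ over $\xi\in G^{\ast}$ using $\abs B^2\le\abs G$ gives $\xi\in R(B)$ with $\abs{B+\xi B}\gg\abs B^2$.

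\emph{Step 3.} The exponents $(7,4)$ and $(6,5)$ are asserted rather than derived, and the bookkeeping you describe contradicts them. Comparing $\abs{B'}^2$, with $\abs{B'}\gg N/r^{O(1)}$, against an upper bound $d^ar^bN$ only yields $N\ll d^ar^{b+O(1)}$.

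Together with the unresolved transfer of the escape from $B$ to $B'$ that you flag yourself, this makes the sketch a heuristic outline at best; the proof is the citation.
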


\begin{proof}
Mohammadi's Lemma 1, applied with the constant $C=1$ in its subfield hypothesis, states that
\[
\abs{B\pm B}^{7}\abs{B/B}^{4}\gg_{\eta}\abs B^{12}
\qquad\text{or}\qquad
\abs{B\pm B}^{6}\abs{B/B}^{5}\gg_{\eta}\abs B^{12}.
\]
The notation $B\pm B$ in that lemma covers both the sum-set and difference-set forms.  Taking the difference-set form gives \eqref{eq:DR}.
\end{proof}

\begin{remark}\label{rem:constants}
Because $\eta=1/16$ is fixed once and for all, the constants implicit in \cref{prop:DR} are absolute. The same is true for every subsequent use of \cref{lem:RS}, where the auxiliary parameters are fixed numerical constants. Thus the generic bound in \cref{cor:R188} may be written as $Q\le C_{\mathrm g}K^{188}$ for one absolute constant $C_{\mathrm g}$, while the constants in the subfield alternative are absolute as well.
\end{remark}

We also need a form of Vinh's incidence estimate adapted to differences and ratios.

\begin{lemma}[Difference--ratio incidence estimate]\label{lem:VDR}
Let $B\subseteq\F_Q^{\ast}$, $h=\abs B$, $D=\abs{B-B}$ and $R=\abs{B/B}$. Then
\begin{equation}\label{eq:VDR}
h^2\le \frac{DRh}{Q}+Q^{1/2}\sqrt{DR}.
\end{equation}
Consequently, if $D\le dh$ and $R\le rh$, then either
\begin{equation}\label{eq:VDR-alt1}
h\gg \frac{Q}{dr}
\end{equation}
or
\begin{equation}\label{eq:VDR-alt2}
h\ll Q^{1/2}(dr)^{1/2}.
\end{equation}
\end{lemma}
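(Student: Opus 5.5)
We apply Vinh's point--line incidence theorem to a point set and a line set built from $B-B$ and $B/B$. Recall the form we use: for $P\subseteq\F_Q^2$ and a set $L$ of lines,
\[
I(P,L)\le \frac{\abs P\abs L}{Q}+Q^{1/2}\sqrt{\abs P\abs L}.
\]
Take the point set $P=(B-B)\times(B/B)$, so that $\abs P=DR$. For the lines, each pair $(b,c)\in B\times B$ gives the line
\[
\ell_{b,c}=\{(x,y): x=b\,y-c\}.
\]
We parametrize by $y$ so that $x$ is an affine function of $y$. The $h^2$ lines $\ell_{b,c}$ are pairwise distinct, since slope and intercept determine $(b,c)$, and so $\abs L=h^2$.

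Next we count incidences from below. Fix $a,b,c\in B$ and put $y=a/b\in B/B$ and $x=a-c\in B-B$; this uses $b\neq 0$, which holds because $B\subseteq\F_Q^\ast$. Then $by-c=a-c=x$, so $(x,y)\in P$ lies on $\ell_{b,c}$. For a fixed line $\ell_{b,c}$, distinct $a$ give distinct $y=a/b$, hence distinct incident points. Therefore $I(P,L)\ge h^3$.

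Combining the two bounds gives
\[
h^3\le \frac{DRh^2}{Q}+Q^{1/2}h\sqrt{DR}.
\]
Dividing by $h$ yields exactly \eqref{eq:VDR}.

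For the consequence, substitute $D\le dh$ and $R\le rh$:
\[
h^2\le \frac{dr h^3}{Q}+Q^{1/2}h\sqrt{dr}.
\]
One of the two terms on the right is at least $h^2/2$.
\begin{itemize}
\item If it is the first, then $h^2\le 2\,dr\,h^3/Q$, so $h\ge Q/(2dr)$, which is \eqref{eq:VDR-alt1}.
\item If it is the second, then $h^2\le 2Q^{1/2}h\sqrt{dr}$, so $h\le 2Q^{1/2}(dr)^{1/2}$, which is \eqref{eq:VDR-alt2}.
\end{itemize}

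The only delicate point is choosing the incidence configuration so that the lower bound is $h^3$ while $\abs L=h^2$ and $\abs P=DR$. This requires checking that the lines are distinct and that each line meets $h$ distinct points. Both checks are straightforward once lines are written as $x=by-c$, and the version of Vinh's theorem above applies to any set of distinct lines.
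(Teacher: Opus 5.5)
Your proof is correct and is essentially the paper's argument. The paper uses the same point set $(B-B)\times(B/B)$ and the lines $y=(x+a)/b$, which are your lines $x=by-c$ after renaming; it feeds the same $h^3$ incidence lower bound into Vinh's theorem and uses the same halving argument to get the two alternatives (your explicit check that each line carries $h$ distinct incident points is a detail the paper leaves implicit).
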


\begin{proof}
Take the point set $P=(B-B)\times(B/B)$ and the $h^2$ lines
\[
\ell_{a,b}:\quad y=\frac{x+a}{b},
\qquad a,b\in B.
\]
They are pairwise distinct. Every triple $(a,b,c)\in B^3$ gives the incidence
\[
(c-a,c/b)\in\ell_{a,b},
\]
so $I(P,\mathcal L)\ge h^3$. Vinh's point--line incidence theorem over arbitrary finite fields \cite[Theorem~3]{Vinh} yields
\[
h^3\le \frac{DRh^2}{Q}+Q^{1/2}h\sqrt{DR},
\]
which is \eqref{eq:VDR}. If the first term on the right is at least $h^2/2$, then \eqref{eq:VDR-alt1} follows; otherwise the second term is at least $h^2/2$, which gives \eqref{eq:VDR-alt2}.
\end{proof}

\section{Fourier structure in the large-energy case}\label{sec:fourier}

Throughout this section let $A\subseteq\F_q$, $n=\abs A$, and
\[
Q=\frac qn.
\]
Assume that \cref{lem:MP-B} applies with a parameter $K\ge1$ and a set $X$ satisfying \eqref{eq:MP-energy}--\eqref{eq:Xlower}.

\subsection*{The common spectrum and the first BSG step}

Define
\[
F(t)=\abs{\widehat{\1_A}(t)}^2
\]
and the nonzero spectrum
\[
S=\left\{t\in\F_q^{\ast}:F(t)\ge \frac{n^2}{4K}\right\}.
\]

\begin{lemma}[Common spectrum]\label{lem:common-spectrum}
For every $\xi\in X$,
\begin{equation}\label{eq:overlap}
\abs{S\cap\xi^{-1}S}\ge \frac{Q}{4K},
\end{equation}
and
\begin{equation}\label{eq:Supper}
\abs S\le 4KQ.
\end{equation}
In particular, writing
\begin{equation}\label{eq:sdef}
\abs S=sQ,
\end{equation}
we have
\begin{equation}\label{eq:srange}
\frac1{4K}\le s\le4K.
\end{equation}
\end{lemma}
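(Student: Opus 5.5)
The plan is to express each energy $E^+(A,\xi A)$ as a Fourier sum of products $F(t)F(\xi t)$, and then argue that most of its mass must come from frequencies $t$ with both $t$ and $\xi t$ in $S$. The upper bound \eqref{eq:Supper} is a separate one-line Parseval estimate.

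\emph{Fourier formula.} Write $r(x)=\#\{(a,b)\in A^2:a+\xi b=x\}$, so that $E^+(A,\xi A)=\sum_x r(x)^2$. Since $r=\1_A*\1_{\xi A}$ and $\widehat{\1_{\xi A}}(t)=\widehat{\1_A}(\xi t)$, Parseval gives
\[
E^+(A,\xi A)=\frac1q\sum_{t\in\F_q}F(t)F(\xi t).
\]
The $t=0$ term contributes $n^4/q$. Because $n\le q/(4K)$, this is at most $n^3/(4K)$. Combined with \eqref{eq:MP-energy}, this yields
\[
\sum_{t\neq0}F(t)F(\xi t)\ge \frac{3qn^3}{4K}.
\]

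\emph{Discarding frequencies outside $S$.} I split the nonzero $t$ according to whether $t\notin S$ or $\xi t\notin S$; here $\xi\ne0$ is used so that $\xi t\neq 0$.
\begin{itemize}[leftmargin=*]
\item On the terms with $t\notin S$, I bound $F(t)<n^2/(4K)$. The remaining factor satisfies $\sum_t F(\xi t)=qn$ by Parseval, since $t\mapsto\xi t$ is a bijection. So these terms contribute at most $qn^3/(4K)$.
\item Symmetrically, the terms with $\xi t\notin S$ contribute at most $qn^3/(4K)$.
\end{itemize}
Hence the terms with $t\in S\cap\xi^{-1}S$ contribute at least $qn^3/(4K)$. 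Each such term is at most $F(t)F(\xi t)\le n^4$, by the trivial bound $\abs{\widehat{\1_A}}\le n$. Therefore
\[
\abs{S\cap\xi^{-1}S}\ge \frac{qn^3}{4Kn^4}=\frac{Q}{4K},
\]
which is \eqref{eq:overlap}.

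\emph{Upper bound and the range of $s$.} Parseval gives $\sum_{t\ne0}F(t)\le qn$. Every $t\in S$ has $F(t)\ge n^2/(4K)$, so $\abs S\,n^2/(4K)\le qn$, i.e. $\abs S\le 4Kq/n=4KQ$, which is \eqref{eq:Supper}. For \eqref{eq:srange}, the upper bound $s\le 4K$ is immediate. The lower bound uses $\abs S\ge\abs{S\cap\xi^{-1}S}\ge Q/(4K)$ for any single $\xi\in X$; such a $\xi$ exists because $X$ is nonempty by \eqref{eq:Xlower}.

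The only point needing care is the constant bookkeeping. The threshold $n^2/(4K)$ in the definition of $S$, together with the hypothesis $n\le q/(4K)$, leaves exactly $qn^3/(4K)$ for the main term after removing the zero frequency and the two non-spectral pieces. I expect no real obstacle beyond this.
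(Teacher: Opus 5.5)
Your proposal is correct and follows the paper's proof essentially step for step. Both use the Fourier identity for $E^+(A,\xi A)$, remove the zero frequency via $n\le q/(4K)$, apply the two Parseval bounds for $t\notin S$ and $\xi t\notin S$, use the trivial bound $F\le n^2$ to count the overlap, and apply Parseval again to get $\abs S\le 4KQ$. Your remark that $\xi\neq0$ forces $\xi t\neq0$ is exactly why the paper removes $0$ from $X$ in \cref{lem:MP-B}.
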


\begin{proof}
Fourier inversion gives
\begin{equation}\label{eq:fourier-energy}
E^+(A,\xi A)=\frac1q\sum_{t\in\F_q}F(t)F(\xi t).
\end{equation}
The contribution of $t=0$ is $n^4/q\le n^3/(4K)$ because $n\le q/(4K)$. On the set $t\ne0$, $t\notin S$, we have $F(t)<n^2/(4K)$, so Parseval gives
\[
\frac1q\sum_{\substack{t\ne0\\t\notin S}}F(t)F(\xi t)
\le
\frac{n^2}{4Kq}\sum_tF(\xi t)
=
\frac{n^3}{4K}.
\]
The same bound holds for the contribution from $\xi t\notin S$. In view of \eqref{eq:MP-energy} and \eqref{eq:fourier-energy}, at least $n^3/(4K)$ remains on $S\cap\xi^{-1}S$. Since every summand is at most $n^4$, this proves \eqref{eq:overlap}. Parseval also gives
\[
\abs S\frac{n^2}{4K}\le \sum_tF(t)=qn,
\]
which is \eqref{eq:Supper}. The lower bound in \eqref{eq:srange} follows from \eqref{eq:overlap} because $X$ is nonempty.
\end{proof}

The ratios in $X$ are not merely present in $S/S$; they are all popular. We exploit this before any additive structure extraction.

\begin{lemma}[Popular-ratio extraction]\label{lem:firstBSG}
There is $T\subseteq S$ such that, with $N=\abs T$,
\begin{equation}\label{eq:Tsize}
N\gg QK^{-3/2}s^{-1/2}
\end{equation}
and
\begin{equation}\label{eq:Tquot}
\abs{T/T}\ll K^9s^9N.
\end{equation}
\end{lemma}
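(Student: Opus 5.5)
We apply the strong popular-difference branch of \cref{lem:RS}, in its multiplicative form, to the group $\F_q^{\ast}$ and the set $Y=S$. The ratios $\xi\in X$ (more precisely their inverses) play the role of the popular differences. For $\xi\in X$, \eqref{eq:overlap} says that
\[
r_{S/S}(\xi^{-1})=\abs{S\cap\xi^{-1}S}\ge \frac{Q}{4K}=\frac{1}{4Ks}\abs S .
\]
Summing over $\xi\in X$ and using \eqref{eq:Xlower} gives
\[
E^{\times}(S)\ge\sum_{\xi\in X}r_{S/S}(\xi^{-1})^2\ge \frac{q}{2Kn}\cdot\frac{Q^2}{16K^2}=\frac{Q^3}{32K^3}=\frac{\abs S^3}{32K^3s^3}.
\]
So $E^{\times}(S)\ge\abs S^3/L$ with $L\asymp K^3s^3$.

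Next we check the popularity condition. The branch requires the relevant ratios to satisfy $r\ge L^{-1/2}\abs S$. Here $L^{-1/2}\abs S\asymp K^{-3/2}s^{-3/2}\cdot sQ=K^{-3/2}s^{-1/2}Q$, whereas each $\xi^{-1}$ has multiplicity at least $Q/(4K)$. We compare the two:
\[
\frac{Q}{4K}\ \text{versus}\ K^{-3/2}s^{-1/2}Q,
\]
that is, $K^{-1}$ versus $K^{-3/2}s^{-1/2}$. This holds up to constants precisely when $s\gg K^{-1}$, which is \eqref{eq:srange}. The energy we used consists entirely of these popular ratios, so it is a full proportion of the lower bound.

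Rather than matching the paper's black-box constants exactly, I would redo the Reiher--Schoen popular-branch argument with the actual parameters. We build the bipartite graph on $S\times S$ with edges $(t,u)$ such that $t/u\in X^{-1}$. Its density is at least
\[
\delta=\frac{\abs X\cdot Q/(4K)}{\abs S^2}\gg \frac{Q^2K^{-2}}{s^2Q^2}=(Ks)^{-2}.
\]
The relation lemma in \cref{lem:RS} then yields $T\subseteq S$ with $N=\abs T\gg\delta\abs S$. For every pair $y_1,y_2\in T$, it gives $\gg\delta^4\abs S^2N$ paths $y_1\sim x\sim b\sim z\sim y_2$. Each such path writes $y_1/y_2$ as a product of four ratios $\xi_1^{\pm1}\cdots$. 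This gives the standard count
\[
\abs{T/T}\cdot\delta^4\abs S^2N\le \abs X^4\cdot(\text{fibre bound}).
\]

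The main obstacle is that this crude path bound gives the size $\delta\abs S\sim Q/(K^2s)$, which is weaker than \eqref{eq:Tsize}. To get the $L^{-1/2}$ size and the $L^3$ quotient bound, I would invoke the strong branch directly with $L\asymp K^3s^3$, which yields
\[
N\ge(1-\eps)L^{-1/2}\abs S\gg K^{-3/2}s^{-3/2}\cdot sQ=QK^{-3/2}s^{-1/2},
\qquad
\abs{T/T}\ll L^3N=K^9s^9N .
\]
These are exactly \eqref{eq:Tsize} and \eqref{eq:Tquot}. The step I expect to need the most care is confirming that the hypotheses of \cite[Lemma~3.1]{ReiherSchoen} are met. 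Specifically, one must check that the popular ratios carry a fixed proportion of the energy at threshold $L^{-1/2}\abs S$, with absolute constants uniform in $s$ thanks to \eqref{eq:srange}. It may be necessary to pass to a dyadic sub-range of multiplicities so that the energy lower bound and the popularity threshold are computed with the same $L$. Finally, $T\subseteq S\subseteq\F_q^{\ast}$, so $T/T$ is well defined.
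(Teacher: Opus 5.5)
Your final argument is the paper's proof: you apply the popular branch \cite[Lemma~3.1]{ReiherSchoen} multiplicatively to $S$, using that every $\xi\in X$ is popular because $r_{S/S}(\xi)\ge Q/(4K)$, and that these ratios carry energy at least $\abs X\,(Q/(4K))^2\ge Q^3/(32K^3)$. The only care needed is the constant in $L\asymp K^3s^3$: the paper takes $L_{\times}=64K^3s^3$, so that $L_{\times}^{-1/2}\abs S\le Q/(4K)$ follows exactly from $s\ge 1/(4K)$ while the popular energy equals $2\abs S^3/L_{\times}$; your relation-lemma detour and the possible dyadic refinement are not needed.
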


\begin{proof}
For $\xi\in X$, \eqref{eq:overlap} says
\[
r_{S/S}(\xi)\ge \frac{Q}{4K}.
\]
Set
\[
L_{\times}=64K^3s^3.
\]
Using \eqref{eq:srange},
\[
L_{\times}^{-1/2}\abs S
=
\frac{Q}{8K^{3/2}s^{1/2}}
\le
\frac{Q}{4K}.
\]
Thus every $\xi\in X$ lies in the popular-ratio set at threshold $L_{\times}^{-1/2}\abs S$. Moreover,
\[
\sum_{\xi\in X}r_{S/S}(\xi)^2
\ge
\frac{Q}{2K}\left(\frac{Q}{4K}\right)^2
=
\frac{Q^3}{32K^3},
\]
while $\abs S^3/L_{\times}=Q^3/(64K^3)$.  Thus the popular ratios carry more than the fixed positive proportion of energy required by \cite[Lemma 3.1]{ReiherSchoen}. Applying that lemma in the multiplicative group $\F_q^{\ast}$ gives
\[
\abs T\gg L_{\times}^{-1/2}\abs S
\gg QK^{-3/2}s^{-1/2}
\]
and
\[
\abs{T/T}\ll L_{\times}^3\abs T\ll K^9s^9\abs T.
\]
\end{proof}

The Fourier spectrum also gives additive energy to every subset.

\begin{lemma}[Hereditary additive energy]\label{lem:hereditary}
For every $Y\subseteq S$,
\begin{equation}\label{eq:hered}
E^+(Y)\ge \frac{1}{16K^2Q}\abs Y^4.
\end{equation}
In particular, for $T$ in \cref{lem:firstBSG},
\begin{equation}\label{eq:Lplus}
E^+(T)\gg \frac{N^3}{L},
\qquad
L\ll K^{7/2}s^{1/2}.
\end{equation}
\end{lemma}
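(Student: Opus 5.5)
The plan is to write the additive energy of $Y$ through the Fourier coefficients of $\1_A$ restricted to the spectrum, and then use Cauchy--Schwarz twice. The key point is that every $t\in Y$ carries a large coefficient, $\abs{\widehat{\1_A}(t)}^2\ge n^2/(4K)$. This means $Y$ correlates with the function $t\mapsto\widehat{\1_A}(t)$, which is supported, in dual form, on the small set $A$.

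Concretely, set $g(t)=\1_Y(t)\overline{\widehat{\1_A}(t)}$. Its Fourier coefficient at $x$ is a sum over $t\in Y$, so
\[
\sum_{t\in Y}\abs{\widehat{\1_A}(t)}^2=\sum_{t\in Y}\widehat{\1_A}(t)\overline{\widehat{\1_A}(t)}=\sum_{a\in A}\sum_{t\in Y}\psi(-ta)\overline{\widehat{\1_A}(t)} .
\]
The left side is at least $\abs Y n^2/(4K)$. A cleaner route uses phases: choose unimodular $c_t$ with $\abs{\widehat{\1_A}(t)}=c_t\widehat{\1_A}(t)$. Then
\[
\frac{\abs Y n}{2K^{1/2}}\le\sum_{t\in Y}\abs{\widehat{\1_A}(t)}=\sum_{a\in A}\sum_{t\in Y}c_t\psi(-ta)=\sum_{a\in A}\widehat{h}(a),
\]
where $h=\sum_{t\in Y}c_t\delta_t$.

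I would then apply H\"older with exponent $4$ over $a\in A$:
\[
\Bigl(\sum_{a\in A}\abs{\widehat h(a)}\Bigr)^4\le n^3\sum_{x\in\F_q}\abs{\widehat h(x)}^4 .
\]
The right side equals $n^3q\cdot E(h)$, where $E(h)=\sum_{t_1+t_2=t_3+t_4}c_{t_1}c_{t_2}\overline{c_{t_3}c_{t_4}}$. Since the $c_t$ are unimodular, $\abs{E(h)}\le E^+(Y)$. Combining the two displays,
\[
\frac{\abs Y^4n^4}{16K^2}\le n^3qE^+(Y),
\]
so $E^+(Y)\ge \abs Y^4 n/(16K^2q)=\abs Y^4/(16K^2Q)$. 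This is \eqref{eq:hered}. The only care needed is the normalization: $\sum_x\abs{\widehat h(x)}^4=q\cdot\#$ of additive quadruples, weighted, under the unnormalised transform.

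For \eqref{eq:Lplus}, I would take $Y=T$ and write $N^4/(16K^2Q)=N^3\cdot N/(16K^2Q)$. By \eqref{eq:Tsize}, $N/Q\gg K^{-3/2}s^{-1/2}$, so $E^+(T)\gg N^3K^{-7/2}s^{-1/2}$. This gives $L\ll K^{7/2}s^{1/2}$.

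The main point to get right is the H\"older step and the Fourier normalization. The remaining steps are bookkeeping.
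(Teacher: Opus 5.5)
Your proposal is correct and matches the paper's proof step for step. Both use the same unimodular phases $c_t$ and the same H\"older step with exponent $4$ over $a\in A$. Both bound the weighted fourth moment by $qE^+(Y)$, and both substitute \eqref{eq:Tsize} to get $L\ll K^{7/2}s^{1/2}$.
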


\begin{proof}
For $t\in Y$, choose a complex number $c_t$ of modulus one such that $c_t\widehat{\1_A}(t)=\abs{\widehat{\1_A}(t)}$. Since $Y\subseteq S$,
\[
\frac{n}{2\sqrt K}\abs Y
\le
\left|\sum_{a\in A}\sum_{t\in Y}c_t\psi(-ta)\right|.
\]
H\"older's inequality gives
\[
\frac{n}{2\sqrt K}\abs Y
\le
n^{3/4}
\left(\sum_{x\in\F_q}\left|\sum_{t\in Y}c_t\psi(-tx)\right|^4\right)^{1/4}.
\]
By character orthogonality, the fourth moment is at most $qE^+(Y)$, because for every additive quadruple the product of the four phases has modulus one. Raising to the fourth power yields
\[
E^+(Y)\ge \frac{n}{16K^2q}\abs Y^4
=
\frac{1}{16K^2Q}\abs Y^4,
\]
which proves \eqref{eq:hered}. Taking $Y=T$ and using \eqref{eq:Tsize},
\[
\frac{1}{16K^2Q}N^4
\gg
\frac{N^3}{K^{7/2}s^{1/2}},
\]
which is \eqref{eq:Lplus}.
\end{proof}

\subsection*{Levelwise additive structure}

The next proposition is the quantitative heart of the proof. It avoids the loss coming from immediately replacing the additive energy of $T$ by a single subset with $L^4$ difference doubling.

\begin{proposition}[Levelwise additive-structure alternative]\label{prop:energy-form}
Let $T\subseteq\F_q^{\ast}$, $N=\abs T$, and suppose $N\ll q^{1/2}$ and
\begin{equation}\label{eq:energy-input}
E^+(T)\gg \frac{N^3}{L},
\qquad
\abs{T/T}\le \rho N
\end{equation}
with $L,\rho\ge1$. Fix $\eta=1/16$. Then one of the following holds:
\begin{enumerate}[label=\textup{(\roman*)},leftmargin=2.2em]
\item there is $U\subseteq T$ with
\begin{equation}\label{eq:Ustructuredsize}
\abs U\gg \frac{N}{\sqrt L}
\end{equation}
and a proper subfield $G<\F_q$, $c\in\F_q^{\ast}$, such that
\begin{equation}\label{eq:Uconcentration}
\abs{U\cap cG}>\max\left\{\abs G^{1/2},\eta\abs U\right\};
\end{equation}
\item
\begin{equation}\label{eq:generic1}
N\ll \rho^4L^{28};
\end{equation}
\item
\begin{equation}\label{eq:generic2}
N\ll \rho^5L^{24}.
\end{equation}
\end{enumerate}
\end{proposition}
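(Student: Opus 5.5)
The plan is to decompose the additive energy of $T$ by difference multiplicity and to feed one dense level through the relation part of \cref{lem:RS}. This produces a set $U\subseteq T$ with $\abs U\gg N/\sqrt L$ and controlled difference doubling. Its quotient set is automatically controlled, since $U/U\subseteq T/T$ gives
\[
\frac{\abs{U/U}}{\abs U}\le \frac{\rho N}{\abs U}\ll \rho\sqrt L .
\]
We then apply \cref{prop:DR} to $U$, which is legitimate because $\abs U\le N\ll q^{1/2}$ and $U\subseteq\F_q^{\ast}$. If the subfield-avoidance hypothesis \eqref{eq:subfieldavoid} fails for $U$, we are exactly in alternative (i). Otherwise one of the two inequalities in \eqref{eq:DR} holds for $U$, and this will be converted into (ii) or (iii).

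\emph{Step 1: choosing a level.} Write $r(x)=r_{T-T}(x)$, so that $E^+(T)=\sum_x r(x)^2\gg N^3/L$. The differences with $r(x)<N/(CL)$ contribute at most $N^3/(CL)$ in total, so they can be discarded. Split the remaining differences dyadically into the sets
\[
P_\Delta=\{x: \Delta\le r(x)<2\Delta\},\qquad N/(CL)\le\Delta\le N .
\]
To avoid a logarithmic loss, choose a level $\Delta$ by a weighted pigeonhole argument: it must carry energy at least $c\,(\Delta L/N)^{-\theta}N^3/L$ for a small fixed $\theta$, which is possible because these weights sum to a constant. Let $R=\{(y,z)\in T^2: y-z\in P_\Delta\}$, and write $\delta$ for its density, so that $\delta N^2\asymp \abs{P_\Delta}\Delta$. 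The energy of the level is then $\asymp\abs{P_\Delta}\Delta^2\asymp\delta N^2\Delta$. This gives $\delta\Delta\gg N/L$ up to the weight factor, and in particular $\delta\gg 1/L$ when $\Delta\le N$.

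\emph{Step 2: the relation lemma.} Apply the relation part of \cref{lem:RS} to $R$. It gives $U\subseteq T$ with $\abs U\gg\delta N$, and for each pair $(u_1,u_2)\in U^2$ at least $\gg\delta^4N^2\abs U$ triples $(x,b,z)$ with
\[
u_1-u_2=(u_1-x)-(b-x)+(b-z)-(u_2-z),
\]
where all four brackets lie in $P_\Delta$. Count representations of each element of $U-U$ as an alternating sum of four elements of $P_\Delta$, weighted by the multiplicities $r(\cdot)\asymp\Delta$, which is the feature that retains the multiplicity parameter. A Cauchy--Schwarz or counting bound against $\abs{P_\Delta}^3$ together with the factor $\Delta$ then bounds $\abs{U-U}/\abs U$ by a power of $L$.

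\emph{Step 3: the numerology.} The target is $\abs{U-U}\ll L^{7/2}\abs U$, or a level-dependent trade-off between $\delta$ and $d=\abs{U-U}/\abs U$ that gives the same final exponents. Write $r=\abs{U/U}/\abs U\ll\rho\sqrt L$ and use $N\ll\sqrt L\,\abs U$.
\begin{itemize}
\item From $d^7r^4\gg\abs U$ we get $N\ll \sqrt L\,d^7r^4\ll\rho^4L^{1/2+2+49/2}=\rho^4L^{27}$, which implies (ii).
\item From $d^6r^5\gg\abs U$ we get $N\ll\sqrt L\,d^6r^5\ll\rho^5L^{1/2+5/2+21}=\rho^5L^{24}$, which is (iii).
\end{itemize}
In the actual balancing one should use $\abs U\gg\delta N$ together with $r\le\rho/\delta$ rather than the crude $N/\sqrt L$. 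The slack $L^{27}$ versus $L^{28}$ absorbs the $\theta$-loss from the weighted pigeonhole. For (i) we likewise need $\abs U\gg N/\sqrt L$. If the chosen level has $\delta<L^{-1/2}$, the plan is to re-select among levels: such levels have $\Delta\gg N/\sqrt L$, so they are popular, and one can instead use the popular-difference branch of \cref{lem:RS}.

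The main obstacle is Step 2. The elementary counting must be done carefully enough that the dependence of $d$ on $(\delta,\Delta)$, combined with $\delta\Delta\gtrsim N/L$, yields exponents no worse than $d\ll L^{7/2}$ on every level. The lossy extreme levels, with $\Delta$ near $N/L$ or near $N$, will be handled by trading $\abs U\gg\delta N$ against $r\le\rho N/\abs U$ before applying \cref{prop:DR}.
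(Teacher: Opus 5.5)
Your skeleton is the paper's mechanism: the relation lemma on a dense dyadic level, the inclusion $U/U\subseteq T/T$, then \cref{prop:DR}, with failure of \eqref{eq:subfieldavoid} giving (i). A weighted pigeonhole over levels is also a legitimate substitute for the paper's summation over levels. The gap is that you leave the decisive computation undone and aim it at a target the method cannot reach.

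The count the relation lemma supports is the plain one. Distinct elements of $U-U$ use disjoint sets of quadruples in $P_\Delta^4$, so $\abs{U-U}\,\delta^4N^2\abs U\ll\abs{P_\Delta}^4$; no multiplicity weighting or Cauchy--Schwarz enters. Put $t=\Delta/N$ and $z=\abs{P_\Delta}\Delta^2/N^3$, so that $\delta\asymp z/t$. Then
\[
d\ll z^{-2}t^{-2},\qquad r\ll\rho t/z,\qquad \abs U\gg zN/t.
\]
The level $t\asymp1/L$, $\delta\asymp1$ can carry all the energy, and there this gives $d\asymp L^4$. So ``$d\ll L^{7/2}$ on every level'' is not available, and your $L^{27}$ is not the real numerology. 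The level-dependent form of \cref{prop:DR} gives $N\ll\rho^4z^{-19}t^{-9}$ or $N\ll\rho^5z^{-18}t^{-6}$. Inserting $z\gg(tL)^{-\theta}/L$ gives
\[
N\ll\rho^4L^{28}(tL)^{19\theta-9}\qquad\text{or}\qquad N\ll\rho^5L^{24}(tL)^{18\theta-6},
\]
which is $O(\rho^4L^{28})$ or $O(\rho^5L^{24})$ since $tL\gg1$. Both bounds are worst at $t\asymp1/L$. There is therefore no slack in $28$. The $\theta$-loss is harmless for a different reason than you give: the weight is $O(1)$ at the extremal level. At that level the trade-off $\abs U\gg\delta N$, $r\ll\rho/\delta$ is essential; with the crude bounds $\abs U\gg N/\sqrt L$, $r\ll\rho\sqrt L$ you would get $L^{61/2}$. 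The paper reaches the same exponents by summing $\min\{t,(\rho^4/N)^{1/19}t^{-9/19}\}\ll(\rho^4/N)^{1/28}$ over levels.

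There is a second gap in alternative (i), which needs $\abs U\gg N/\sqrt L$. A single pigeonholed level can have $t\asymp1$ and $\delta\asymp L^{-1-\theta}$; a union of $L$ generic translates of a progression of length $N/L$ does this. Your repair is to note that the level is popular and switch to the popular branch of \cref{lem:RS}. That does not work: the popular branch needs popular differences to carry a fixed positive proportion of $E^+(T)$. One level carrying $(tL)^{-\theta}N^3/L$ does not provide that.

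The split has to come first, as in the paper.
\begin{itemize}
\item If differences with $r(x)\ge N/\sqrt L$ carry a fixed proportion of the energy, the popular branch gives $d\ll L^3$, $r\ll\rho\sqrt L$ and $\abs U\gg N/\sqrt L$. This yields $N\ll\rho^4L^{47/2}$ or $N\ll\rho^5L^{21}$.
\item Otherwise, restrict to weak levels ($t\le L^{-1/2}$). Those with $\delta<cL^{-1/2}$ carry at most $\sum_t cL^{-1/2}tN^3\ll cN^3/L$, so discard them.
\end{itemize}
Only then run your weighted pigeonhole. Every remaining level has $\abs U\gg\delta N\gg N/\sqrt L$, as (i) requires.
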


\begin{proof}
After increasing $L$ by an absolute factor, we may and shall assume that
\[
E^+(T)\ge \frac{N^3}{L}.
\]
This changes none of the asserted bounds except their absolute implicit constants. Write $m(d)=r_{T-T}(d)$.

\medskip
\noindent\emph{Popular differences.}
First consider
\[
P=\left\{d:m(d)\ge \frac{N}{\sqrt L}\right\}.
\]
If they carry a fixed positive proportion of the lower bound in \eqref{eq:energy-input}, the popular branch of \cref{lem:RS} gives $U\subseteq T$ with $\abs U\gg N/\sqrt L$ and
\[
\frac{\abs{U-U}}{\abs U}\ll L^3.
\]
Also $\abs{U/U}\le\rho N\ll \rho\sqrt L\abs U$. If $\abs U$ is bounded, the conclusions below are immediate after enlarging the absolute constants. Otherwise, if \eqref{eq:Uconcentration} fails for this $U$, \cref{prop:DR} gives either
\[
\abs U\ll L^{21}(\rho\sqrt L)^4\ll \rho^4L^{23}
\]
or
\[
\abs U\ll L^{18}(\rho\sqrt L)^5\ll \rho^5L^{41/2}.
\]
Since $\abs U\gg N/\sqrt L$, these imply, respectively, bounds stronger than \eqref{eq:generic1} and \eqref{eq:generic2}. Thus it remains to treat the weak differences
\[
W=\left\{d:m(d)<\frac{N}{\sqrt L}\right\},
\]
which we may assume carry $\gg N^3/L$ energy.

\medskip
\noindent\emph{Weak levels and the density threshold.}
Decompose $W$ into dyadic levels. For one level write
\[
D_{\mu}=\{d:\mu\le m(d)<2\mu\},
\qquad J=\abs{D_{\mu}},
\]
and put
\begin{equation}\label{eq:tz}
t=\frac{\mu}{N},
\qquad
z=\frac{J\mu^2}{N^3}.
\end{equation}
The relation
\[
\Gamma_{\mu}=\{(x,y)\in T^2:x-y\in D_{\mu}\}
\]
has density
\begin{equation}\label{eq:density}
\delta_{\mu}=\frac{\abs{\Gamma_{\mu}}}{N^2}\asymp \frac{J\mu}{N^2}=\frac zt.
\end{equation}
Since $t\le L^{-1/2}$ on weak levels and the dyadic values of $t$ form a geometric sequence, the total energy of the levels with $\delta_{\mu}<cL^{-1/2}$ is
\[
\sum_{\delta_{\mu}<cL^{-1/2}}zN^3
\ll
\frac{cN^3}{\sqrt L}\sum_t t
\ll
\frac{cN^3}{L}.
\]
Taking $c>0$ sufficiently small, these levels cannot carry all of the weak energy. We may therefore sum only over levels satisfying
\begin{equation}\label{eq:highdensity}
\delta_{\mu}\gg L^{-1/2}.
\end{equation}

\medskip
\noindent\emph{From a dense relation to controlled difference doubling.}
Apply \cite[Lemma 2.1]{ReiherSchoen} to $\Gamma_{\mu}$ with a fixed auxiliary parameter. It yields $U_{\mu}\subseteq T$ with
\begin{equation}\label{eq:Umusize}
\abs{U_{\mu}}\gg \delta_{\mu}N.
\end{equation}
For every ordered pair $(u_1,u_2)\in U_{\mu}^2$, the lemma gives $\gg \delta_{\mu}^4N^2\abs{U_{\mu}}$ triples $(x,b,y)\in T^3$ such that
\[
(u_1,x),(b,x),(b,y),(u_2,y)\in\Gamma_{\mu}.
\]
For each such triple,
\[
u_1-u_2=(u_1-x)-(b-x)+(b-y)-(u_2-y),
\]
and all four displayed differences lie in $D_{\mu}$. For fixed $u_1,u_2$, the resulting quadruple of differences determines $(x,b,y)$ uniquely. Hence every element of $U_{\mu}-U_{\mu}$ has at least $\gg \delta_{\mu}^4N^2\abs{U_{\mu}}$ representations as $d_1-d_2+d_3-d_4$ with $d_i\in D_{\mu}$. Since there are at most $J^4$ such quadruples,
\[
\abs{U_{\mu}-U_{\mu}}
\ll
\frac{J^4}{\delta_{\mu}^4N^2\abs{U_{\mu}}}.
\]
Dividing by $\abs{U_{\mu}}$ and using \eqref{eq:Umusize} gives
\[
\frac{\abs{U_{\mu}-U_{\mu}}}{\abs{U_{\mu}}}
\ll
\frac{J^4}{\delta_{\mu}^6N^4}.
\]
Substituting \eqref{eq:tz}--\eqref{eq:density}, we obtain
\begin{equation}\label{eq:dmu}
d_{\mu}:=\frac{\abs{U_{\mu}-U_{\mu}}}{\abs{U_{\mu}}}
\ll z^{-2}t^{-2}.
\end{equation}
Since $U_{\mu}/U_{\mu}\subseteq T/T$, \eqref{eq:Umusize} also gives
\begin{equation}\label{eq:rmu}
r_{\mu}:=\frac{\abs{U_{\mu}/U_{\mu}}}{\abs{U_{\mu}}}
\ll \rho\frac tz.
\end{equation}

\medskip
\noindent\emph{Difference--ratio alternatives on each level.}
If some high-density level satisfies \eqref{eq:Uconcentration}, then \eqref{eq:highdensity} and \eqref{eq:Umusize} give \eqref{eq:Ustructuredsize}, and we are in alternative (i). Assume instead that every high-density level is subfield-avoiding. If $\abs{U_{\mu}}$ is bounded by a sufficiently large absolute constant, then \eqref{eq:highdensity} and \eqref{eq:Umusize} imply $N\ll L^{1/2}$, which is already stronger than \eqref{eq:generic1}; hence such a level may be discarded from the remaining argument. We may therefore apply \cref{prop:DR} to every remaining $U_{\mu}$. Using \eqref{eq:Umusize}, \eqref{eq:dmu}, and \eqref{eq:rmu}, the first branch gives
\[
\rho^4z^{-18}t^{-10}
\gg
Nz t^{-1},
\]
so
\begin{equation}\label{eq:z1}
z\ll \left(\frac{\rho^4}{N}\right)^{1/19}t^{-9/19}.
\end{equation}
The second branch gives
\begin{equation}\label{eq:z2}
z\ll \left(\frac{\rho^5}{N}\right)^{1/18}t^{-1/3}.
\end{equation}
Finally, $J\mu\le N^2$ implies the trivial estimate
\begin{equation}\label{eq:ztrivial}
z\ll t.
\end{equation}

\medskip
\noindent\emph{Summation over levels.}
Partition the high-density levels according to which alternative in \cref{prop:DR} holds. For the levels in the first class, the envelope formed by \eqref{eq:z1} and \eqref{eq:ztrivial} satisfies
\begin{equation}\label{eq:envelope1}
\sum_t\min\left\{t,
\left(\frac{\rho^4}{N}\right)^{1/19}t^{-9/19}\right\}
\ll
\left(\frac{\rho^4}{N}\right)^{1/28}.
\end{equation}
Indeed, split the geometric sum at the point where the two terms are equal. For the levels in the second class, similarly,
\begin{equation}\label{eq:envelope2}
\sum_t\min\left\{t,
\left(\frac{\rho^5}{N}\right)^{1/18}t^{-1/3}\right\}
\ll
\left(\frac{\rho^5}{N}\right)^{1/24}.
\end{equation}
Since the retained weak levels carry $\gg N^3/L$ additive energy, their total normalized contribution is $\gg1/L$. Hence \eqref{eq:envelope1}--\eqref{eq:envelope2} imply
\[
\frac1L
\ll
\left(\frac{\rho^4}{N}\right)^{1/28}
+
\left(\frac{\rho^5}{N}\right)^{1/24}.
\]
At least one summand is $\gg1/L$, yielding \eqref{eq:generic1} or \eqref{eq:generic2}.
\end{proof}

We now substitute the Fourier parameters.

\begin{corollary}[Structural exponent $188$]\label{cor:R188}
Under the hypotheses of \cref{lem:MP-B}, and assuming additionally $KQ\ll q^{1/2}$, one of the following holds:
\begin{enumerate}[label=\textup{(\roman*)},leftmargin=2.2em]
\item $Q\le C_{\mathrm g}K^{188}$ for an absolute constant $C_{\mathrm g}$;
\item there are $U\subseteq T$, a proper subfield $G<\F_q$, and $c\in\F_q^{\ast}$ such that
\begin{equation}\label{eq:Ulower-final}
\abs U\gg QK^{-13/4}s^{-3/4}\gg QK^{-4}
\end{equation}
and \eqref{eq:Uconcentration} holds.
\end{enumerate}
\end{corollary}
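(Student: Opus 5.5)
We apply \cref{prop:energy-form} to the set $T$ from \cref{lem:firstBSG}, with the energy parameter $L$ from \cref{lem:hereditary} and the quotient parameter $\rho$ from \eqref{eq:Tquot}. Concretely, we take $N=\abs T$, $\rho\ll K^9s^9$, and $L\ll K^{7/2}s^{1/2}$.

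The size hypothesis $N\ll q^{1/2}$ follows from $N\le\abs S\le 4KQ$ together with the assumption $KQ\ll q^{1/2}$. We also have $T\subseteq S\subseteq\F_q^{\ast}$. The constant $\eta=1/16$ is fixed, so every implied constant below is absolute, as in \cref{rem:constants}.

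\emph{Alternative (i).} This gives $U\subseteq T$ with $\abs U\gg N/\sqrt L$ satisfying \eqref{eq:Uconcentration}. Inserting \eqref{eq:Tsize} and $L\ll K^{7/2}s^{1/2}$ yields
\[
\abs U\gg QK^{-3/2}s^{-1/2}\cdot K^{-7/4}s^{-1/4}=QK^{-13/4}s^{-3/4}.
\]
Since $s\le 4K$ by \eqref{eq:srange}, we get $s^{-3/4}\gg K^{-3/4}$, and hence $\abs U\gg QK^{-4}$. This is exactly \eqref{eq:Ulower-final}.

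\emph{Generic alternatives.} Here we combine the lower bound $N\gg QK^{-3/2}s^{-1/2}$ with \eqref{eq:generic1} or \eqref{eq:generic2}.

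In case (ii),
\[
QK^{-3/2}s^{-1/2}\ll (K^9s^9)^4(K^{7/2}s^{1/2})^{28}=K^{134}s^{50},
\]
so $Q\ll K^{135.5}s^{50.5}$. Using $s\le4K$ gives $Q\ll K^{186}$.

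In case (iii),
\[
QK^{-3/2}s^{-1/2}\ll K^{45}s^{45}\cdot K^{84}s^{12}=K^{129}s^{57},
\]
so $Q\ll K^{130.5}s^{57.5}\ll K^{188}$.

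Both cases give $Q\le C_{\mathrm g}K^{188}$. The $K^{186}$ and $K^{188}$ figures match the exponents quoted in the introduction.

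\emph{Main point to check.} The only delicate step is that every $s$-power appearing in the generic branches is positive. This is what lets us use the upper bound $s\le4K$ rather than the lower bound, and the computation above confirms it. The same care is needed with the implicit constants: the polynomial losses in $s$ and $K$ must be absorbed into a single absolute $C_{\mathrm g}$. This works because $\rho$ and $L$ are explicit monomials, up to absolute factors, and \cref{prop:energy-form} is monotone in these parameters, so replacing $\rho$ and $L$ by their upper bounds is legitimate.
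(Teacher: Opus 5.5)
Your proposal is correct and is essentially the paper's own proof. You apply \cref{prop:energy-form} to $T$ with $\rho\ll K^9s^9$ and $L\ll K^{7/2}s^{1/2}$, and combine this with $N\gg QK^{-3/2}s^{-1/2}$. You then absorb the positive powers $s^{101/2}$ and $s^{115/2}$ using $s\le 4K$, which gives $K^{186}$ and $K^{188}$; the same bound handles the factor $s^{-3/4}$ in the structured branch and gives $QK^{-4}$.

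Your explicit remark that the proposition's hypotheses and generic conclusions are monotone in $\rho$ and $L$ is a small point the paper leaves implicit. It is what justifies substituting the upper bounds for $\rho$ and $L$.
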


\begin{proof}
For $T$ from \cref{lem:firstBSG}, take
\[
\rho\ll K^9s^9,
\qquad
L\ll K^{7/2}s^{1/2}
\]
from \eqref{eq:Tquot} and \eqref{eq:Lplus}. Since $T\subseteq S$ and $\abs S\ll KQ\ll q^{1/2}$ by hypothesis, the size condition in \cref{prop:energy-form} is satisfied. If \cref{prop:energy-form}(i) holds, then \eqref{eq:Tsize} and \eqref{eq:Ustructuredsize} give \eqref{eq:Ulower-final}. Otherwise, \eqref{eq:generic1} and \eqref{eq:Tsize} imply
\[
QK^{-3/2}s^{-1/2}
\ll
(K^9s^9)^4(K^{7/2}s^{1/2})^{28}
=
K^{134}s^{50},
\]
whence
\[
Q\ll K^{271/2}s^{101/2}\ll K^{186}
\]
by \eqref{eq:srange}. Likewise \eqref{eq:generic2} yields
\[
Q\ll K^{261/2}s^{115/2}\ll K^{188}.
\]
Thus the generic case gives $Q\le C_{\mathrm g}K^{188}$ for an absolute constant $C_{\mathrm g}$, as asserted.
\end{proof}

\subsection*{The subfield alternative}

We show that, in the range relevant to \cref{thm:main}, alternative (ii) in \cref{cor:R188} leads to a $q^{1/3}$ subfield and then back to a large subset of a two-dimensional vector space.

\begin{lemma}[Quadratic-subfield exclusion]\label{lem:halfexclude}
Assume
\begin{equation}\label{eq:critical-range}
Cq^{375/563}\le n\le q^{2/3},
\qquad
K\le c q^{1/563}
\end{equation}
with $C$ sufficiently large and $c$ sufficiently small. Suppose $U\subseteq T$ satisfies \eqref{eq:Ulower-final} and \eqref{eq:Uconcentration}. Then the subfield $G$ in \eqref{eq:Uconcentration} cannot have cardinality $q^{1/2}$.
\end{lemma}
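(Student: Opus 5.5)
The plan is to argue by contradiction: assume $\abs G=q^{1/2}$, move the concentrated part of $U$ into $G$, and apply \cref{lem:VDR} inside the field $G\cong\F_{q^{1/2}}$. Put
\[
B=c^{-1}(U\cap cG)\subseteq G^{\ast},\qquad h=\abs B .
\]
By \eqref{eq:Uconcentration} and \eqref{eq:Ulower-final} we have $h>\eta\abs U\gg QK^{-4}$. Since $n\le q^{2/3}$ gives $Q\ge q^{1/3}$, this yields $h\gg q^{1/3}K^{-4}$. In the other direction, $h\le\abs S\le 4KQ$ by \eqref{eq:Supper}. The lower bound $n\ge Cq^{375/563}$ gives $Q\le C^{-1}q^{188/563}$, which is far below $q^{1/2}$.

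\emph{Ratio control.} Dilation preserves ratio sets, and $U\subseteq T$. Hence \eqref{eq:Tquot} and $N\le\abs S\le 4KQ$ give
\[
\abs{B/B}\le\abs{T/T}\ll K^9s^9N\ll K^{19}Q\ll K^{23}h,
\]
using \eqref{eq:srange}.

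\emph{Additive control.} The trivial bound $B-B\subseteq G$ is useless in \cref{lem:VDR}, so I will use the additive structure inherited from the spectrum. The set $U\cap cG$ is contained in $S$, so \cref{lem:hereditary} applies to it. Additive energy is dilation invariant, so
\[
E^+(B)\ge\frac{h^4}{16K^2Q}=\frac{h^3}{L'},\qquad L'=\frac{16K^2Q}{h}\ll K^6 .
\]
Apply the first assertion of \cref{lem:RS} with a fixed $\eps$. This gives $B'\subseteq B$ with
\[
\abs{B'}\gg L'^{-1/2}h\gg q^{1/3}K^{-7},\qquad d:=\frac{\abs{B'-B'}}{\abs{B'}}\ll L'^4\ll K^{24}.
\]
It also gives
\[
r:=\frac{\abs{B'/B'}}{\abs{B'}}\ll K^{23}\frac{h}{\abs{B'}}\ll K^{26}.
\]

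\emph{Incidence step.} Apply \cref{lem:VDR} to $B'\subseteq\F_{Q'}^{\ast}$ with $Q'=\abs G=q^{1/2}$, and note $dr\ll K^{50}$.

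In alternative \eqref{eq:VDR-alt1} we get $\abs{B'}\gg q^{1/2}K^{-50}$. Combined with $\abs{B'}\le 4KQ\ll K q^{188/563}$, this gives $q^{1/2-188/563}\ll K^{51}$. Since $1/2-188/563=187/1126$ and $K\le cq^{1/563}$, the right side is at most $c^{51}q^{51/563}=c^{51}q^{102/1126}$, which is smaller than the left side. This is a contradiction.

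In alternative \eqref{eq:VDR-alt2} we get $\abs{B'}\ll q^{1/4}K^{25}$. Combined with $\abs{B'}\gg q^{1/3}K^{-7}$, this gives $q^{1/12}\ll K^{32}\le c^{32}q^{32/563}$. Since $1/12>32/563$, this is again a contradiction once $c$ is small and $q$ is large. Small $q$ is excluded by enlarging $C$, because \eqref{eq:critical-range} forces $q\ge C^{563/188}$.

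The main obstacle is the exponent bookkeeping. The powers of $K$ lost in the Reiher--Schoen step, in $\abs{T/T}$, and in the factor $s\le 4K$ must all fit inside the margins $187/1126$ and $1/12$ relative to $K\le cq^{1/563}$. If the crude estimate $L'\ll K^6$ were too lossy, I would first try using the popular-difference branch of \cref{lem:RS}, which gives $L'^3$ in place of $L'^4$. I would also keep $s$ explicit instead of bounding it by $4K$. With the rough exponents above ($51$ against $93.5$, and $32$ against $\approx 47$), however, the plain argument seems to have room to spare.
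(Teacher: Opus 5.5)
Your proposal is correct and follows essentially the same route as the paper. You restrict to $U\cap cG$, use \cref{lem:hereditary} to get $L\ll K^6$, apply Reiher--Schoen to reach difference doubling $K^{24}$ and ratio doubling $K^{26}$, and then run \cref{lem:VDR} inside $G$, reaching the same two contradictions $K^{51}$ versus the margin $187/1126$ and $q^{1/12}\ll K^{32}$. The remaining differences are cosmetic: you dilate before rather than after the BSG step, and you bound the ratio constant through $h/\abs{B'}$.
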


\begin{proof}
Let $H=U\cap cG$. Then $\abs H>\eta\abs U\gg QK^{-4}$, where $\eta=1/16$. Since $H\subseteq T\subseteq S$, \cref{lem:hereditary} gives
\[
E^+(H)\ge \frac{\abs H^4}{16K^2Q}\gg \frac{\abs H^3}{K^6}.
\]
Applying \cref{lem:RS} additively, with a fixed $\eps$, gives $C_0\subseteq H$ satisfying
\begin{equation}\label{eq:C0size}
\abs{C_0}\gg QK^{-7}
\end{equation}
and
\begin{equation}\label{eq:C0diff}
\abs{C_0-C_0}\ll K^{24}\abs{C_0}.
\end{equation}
Also $C_0/C_0\subseteq T/T$. By \eqref{eq:Tquot}, $N\le\abs S=sQ$, \eqref{eq:srange}, and \eqref{eq:C0size},
\begin{equation}\label{eq:C0ratio}
\frac{\abs{C_0/C_0}}{\abs{C_0}}
\ll K^{16}s^{10}
\ll K^{26}.
\end{equation}
After dilating by $c^{-1}$ we may regard $C_0$ as a subset of $G$ without changing the difference- or ratio-doubling constants. Apply \cref{lem:VDR} inside $G$. If $\abs G=q^{1/2}$, \eqref{eq:C0diff}--\eqref{eq:C0ratio} give either
\begin{equation}\label{eq:halfcase1}
\abs{C_0}\gg q^{1/2}K^{-50}
\end{equation}
or
\begin{equation}\label{eq:halfcase2}
\abs{C_0}\ll q^{1/4}K^{25}.
\end{equation}
In the first case, because $C_0\subseteq S$ and $\abs S\ll KQ=Kq/n$,
\[
n\ll K^{51}q^{1/2},
\]
contradicting \eqref{eq:critical-range}: the exponent on the right is at most $1/2+51/563<375/563$. In the second case, \eqref{eq:C0size} implies
\[
\frac{q^{3/4}}{n}\ll K^{32}.
\]
But $n\le q^{2/3}$ in \eqref{eq:critical-range}, so the left-hand side is at least $q^{1/12}$, whereas $K^{32}\le c^{32}q^{32/563}$ and $32/563<1/12$. This is impossible for sufficiently small $c$ (and, after enlarging the absolute constants, for all remaining $q$).
\end{proof}

\begin{lemma}[Cubic-subfield localization]\label{lem:cubic}
Under \eqref{eq:critical-range}, if alternative (ii) of \cref{cor:R188} holds, then there is a subfield $G\le\F_q$ with $\abs G=q^{1/3}$, an additive subgroup $V\le\F_q$ which is a two-dimensional vector space over $G$, and $a\in\F_q$ such that
\begin{equation}\label{eq:cosetmass}
\abs{A\cap(a+V)}\gg q^{2/3}K^{-5}.
\end{equation}
\end{lemma}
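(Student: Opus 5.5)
Starting from alternative (ii) of \cref{cor:R188}, we have $U\subseteq T\subseteq S$ with $\abs U\gg QK^{-4}$, a proper subfield $G<\F_q$ and $c\in\F_q^{\ast}$ with $H=U\cap cG$ satisfying $\abs H>\max\{\abs G^{1/2},\eta\abs U\}$. The plan has three steps.

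\textbf{Step 1: pin down $\abs G=q^{1/3}$.} Since $\abs H\le\abs G$ and $\abs H\gg QK^{-4}\ge q^{1/3}K^{-4}$ (using $n\le q^{2/3}$), the subfield cannot be small. More precisely, $\abs G=q^{1/k}$ with $k\ge2$, and $q^{1/k}\gg q^{1/3}K^{-4}$ with $K\le cq^{1/563}$ forces $k\le3$. The case $k=2$ is excluded by \cref{lem:halfexclude}, so $\abs G=q^{1/3}$.

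Note also that $\abs H\le q^{1/3}$ together with $\abs H\gg QK^{-4}$ gives $Q\ll q^{1/3}K^4$, that is, $n\gg q^{2/3}K^{-4}$. Thus $A$ has nearly full size at the $q^{2/3}$ scale, and $H$ occupies a $\gg K^{-4}Q/q^{1/3}$ proportion of the line $cG$.

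\textbf{Step 2: transfer the frequency concentration to physical space.} For each $t\in H\subseteq S$ we have $\abs{\widehat{\1_A}(t)}^2\ge n^2/(4K)$, hence
\[
\sum_{t\in cG}\abs{\widehat{\1_A}(t)}^2\ge \abs H\frac{n^2}{4K}\gg \frac{Qn^2}{K^5}=\frac{qn}{K^5}.
\]
Let $W=(cG)^{\perp}=\{x:\psi(tx)=1\ \forall t\in cG\}$, where the annihilator is taken with respect to the trace pairing. Since $cG$ is a one-dimensional $G$-subspace, $W$ is a $G$-subspace of codimension one, so $\abs W=q^{2/3}$ and $W$ is a two-dimensional vector space over $G$.

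Poisson summation (orthogonality over the subgroup $cG$) gives
\[
\sum_{t\in cG}\abs{\widehat{\1_A}(t)}^2=\abs{cG}\sum_{y\in\F_q/W}\abs{A\cap(y+W)}^2=q^{1/3}\sum_y\abs{A\cap(y+W)}^2.
\]
Therefore
\[
\sum_y\abs{A\cap(y+W)}^2\gg \frac{q^{2/3}n}{K^5}.
\]
Since $\sum_y\abs{A\cap(y+W)}=n$, some coset satisfies $\abs{A\cap(a+W)}\gg q^{2/3}K^{-5}$. This is \eqref{eq:cosetmass} with $V=W$.

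\textbf{Main obstacle.} The delicate point is Step 1: checking that the exponent arithmetic in \eqref{eq:critical-range} rules out every $k\ge4$ uniformly, and that the implied constants are absorbed by choosing $c$ small and $C$ large. One must also remember that only $k\in\{2,3\}$ with $k\mid[\F_q:\F_p]$ occur, so the condition that $q^{1/3}$ be an integer is automatic in the surviving case.

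A secondary point is Step 2: verifying that the annihilator of $cG$ under $\psi$ (via the trace) is indeed a $G$-subspace. This holds because $\psi(tx)=\psi_0(\mathrm{Tr}(tx))$ and $\mathrm{Tr}_{\F_q/\F_p}=\mathrm{Tr}_{G/\F_p}\circ\mathrm{Tr}_{\F_q/G}$. Hence $W=\{x:\mathrm{Tr}_{\F_q/G}(cx)=0\}$, which is a $G$-hyperplane.
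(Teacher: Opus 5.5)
Your proposal is correct and follows the paper's proof essentially step for step: $\abs G\ge\abs H\gg QK^{-4}\ge q^{1/3}K^{-4}$ rules out extension degree $\ge4$, \cref{lem:halfexclude} removes degree $2$, and orthogonality over $cG$ moves the spectral mass $\abs H n^2/(4K)$ onto a coset of the annihilator $V=(cG)^{\perp}$. The only cosmetic difference is how $V$ is shown to be a $G$-subspace. The paper checks closure under $G$ directly via $\psi(cg'\cdot gx)=\psi(c(g'g)x)=1$, which avoids your trace description; that description, for a general nontrivial $\psi$, would need a twist, namely $\mathrm{Tr}_{\F_q/G}(bcx)=0$.
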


\begin{proof}
By \eqref{eq:Ulower-final}, \eqref{eq:critical-range}, and $Q=q/n$, the upper bound $n\le q^{2/3}$ gives
\[
\abs U\gg QK^{-4}\gg q^{1/3-4/563}>q^{1/4},
\]
while $U\subseteq S$ and \eqref{eq:Supper} give
\[
\abs U\ll KQ\ll q^{189/563}.
\]
Thus $q^{1/4}<\abs U<q^{1/2}$ after adjusting the constants. The subfield $G$ in \eqref{eq:Uconcentration} is proper by construction. Since $\abs{U\cap cG}>\eta\abs U$ with $\eta=1/16$, we have $\abs G\gg\abs U>q^{1/4}$. Every proper subfield of $\F_q$ has order $q^{1/r}$ for an integer extension degree $r\ge2$. The condition $\abs G>q^{1/4}$ leaves only $r=2$ or $r=3$. The case $r=2$ is excluded by \cref{lem:halfexclude}, so $\abs G=q^{1/3}$.

Set $W=cG$, viewed as an additive subgroup of $\F_q$, and let
\[
V=W^{\perp}=\{x\in\F_q:\psi(wx)=1\text{ for all }w\in W\}.
\]
Then $\abs V=q/\abs W=q^{2/3}$. Moreover, $V$ is closed under multiplication by $G$: if $g\in G$, $x\in V$, and $w=cg'\in W$, then $wgx=c(gg')x$, and $c(gg')\in W$. Hence $V$ is a two-dimensional vector space over $G$.

Let $H=U\cap W$. By \eqref{eq:Uconcentration} and \eqref{eq:Ulower-final}, $\abs H\gg QK^{-4}$. Character orthogonality gives
\begin{equation}\label{eq:annihilator}
\sum_{t\in W}\abs{\widehat{\1_A}(t)}^2
=
\abs W\sum_{C\in\F_q/V}\abs{A\cap C}^2.
\end{equation}
Since $H\subseteq U\subseteq T\subseteq S$,
\[
\sum_{t\in W}\abs{\widehat{\1_A}(t)}^2
\ge
\frac{\abs H n^2}{4K}.
\]
Combining this with \eqref{eq:annihilator} and $\sum_C\abs{A\cap C}=n$, there is a coset $a+V$ for which
\[
\abs{A\cap(a+V)}
\ge
\frac{\abs H n}{4K\abs W}
\gg
\frac{q}{K^5\abs G}
=
q^{2/3}K^{-5}.
\]
This is \eqref{eq:cosetmass}.
\end{proof}

\begin{proposition}[Large-energy branch]\label{prop:largebranch}
There exist absolute constants $c_{\ast},C_{\ast}>0$ and $q_{\ast}$ such that the following holds. Let $q\ge q_{\ast}$, let $A\subseteq\F_q$, put $n=\abs A$ and $Q=q/n$, and suppose
\[
C_{\ast}q^{375/563}\le n\le q^{2/3},
\qquad
1\le K\le c_{\ast}Q^{1/188}.
\]
If the large-energy hypothesis of \cref{lem:MP-B} holds, then
\[
\abs{(A-A)(A-A)}>\frac q2.
\]
\end{proposition}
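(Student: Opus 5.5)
The plan is to apply \cref{cor:R188} and check that both of its alternatives end in the desired conclusion. First, the side condition $KQ\ll q^{1/2}$. Since $n\ge C_{\ast}q^{375/563}$, we have $Q\le C_{\ast}^{-1}q^{188/563}$, and $K\le c_{\ast}Q^{1/188}\le c_{\ast}q^{1/563}$. Hence $KQ\ll q^{189/563}\ll q^{1/2}$. The hypothesis $n\le q/(4K)$ of \cref{lem:MP-B} also holds, because $n\le q^{2/3}$ and $K\le q^{1/563}$ give $4Kn\le q$ for $q\ge q_\ast$. So the whole Fourier machinery of \cref{sec:fourier} is available.

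\emph{Generic alternative.} Suppose (i) of \cref{cor:R188} holds. Then $Q\le C_{\mathrm g}K^{188}\le C_{\mathrm g}c_{\ast}^{188}Q$. Choosing $c_{\ast}$ so small that $C_{\mathrm g}c_{\ast}^{188}<1$, this is a contradiction. By \cref{rem:constants}, $C_{\mathrm g}$ is absolute, so this choice is legitimate. Hence the generic alternative cannot occur.

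\emph{Subfield alternative.} Here (ii) holds. Taking $c=c_{\ast}$ and $C=C_{\ast}$ large, the hypotheses \eqref{eq:critical-range} are met. \cref{lem:cubic} then gives a subfield $G$ with $\abs G=q^{1/3}$, a two-dimensional $G$-space $V$, and a coset with $\abs{A\cap(a+V)}\gg q^{2/3}K^{-5}\ge q^{2/3-5/563}$.

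Put $A'=a^{-1}$-free translate $A\cap(a+V)-a\subseteq V$. Since differences are translation invariant, $(A'-A')(A'-A')\subseteq (A-A)(A-A)$. I would then invoke the vector-space result \cite[Theorem~E]{MurphyPetridis}. I expect it to state roughly that a subset of a two-dimensional vector space over a subfield of order $q^{1/3}$, occupying a sufficiently large proportion of that space, has $\abs{(A'-A')(A'-A')}>q/2$. The intuition is that $V\cdot V$ essentially covers $\F_q$, because $\F_q$ is three-dimensional over $G$ and products of two planes spread out.

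\emph{Main obstacle.} The hard step is matching the density $\abs{A'}/\abs V\gg K^{-5}\ge q^{-5/563}$ with the hypothesis of Theorem~E. I would first check whether Theorem~E needs only $\abs{A'}\ge \abs V^{1-\epsilon}$, or a power of $\abs G$ such as $\abs{A'}\gg\abs G^{2-\theta}$ for some explicit $\theta$. The loss $5/563$ in the exponent of $q$ corresponds to $15/563$ in the exponent of $\abs G$, which should fall within the allowed range.

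If Theorem~E instead requires a constant density, the fallback is different. I would rescale by $V$'s basis to view $A'$ inside $G^2$. I would then redo a Vinh-type incidence count of the relevant product configuration in $G^2$ to show that the product set $(A'-A')(A'-A')$ has more than $q/2$ elements. Finally, I would fix $q_\ast$ large enough to absorb all implicit constants.
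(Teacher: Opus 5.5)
Your proposal follows the paper's proof essentially step for step: you check $KQ\ll q^{189/563}=o(q^{1/2})$, rule out the generic alternative of \cref{cor:R188} by taking $C_{\mathrm g}c_{\ast}^{188}<1$, apply \cref{lem:cubic}, translate the heavy coset into $V$, and conclude with \cite[Theorem~E]{MurphyPetridis}; the one bookkeeping addition is that $c_\ast$ must also be chosen no larger than the constant allowed in \cref{lem:halfexclude}. The step you flagged as the main obstacle is harmless, because the paper applies Theorem~E under $\abs{A_0}\ge\sqrt2\,q^{7/12}=\sqrt2\,\abs G^{7/4}$ (your $\theta=1/4$), which yields $(A_0-A_0)(A_0-A_0)=VV$ with $\abs{VV}>q/2$; since $2/3-5/563>7/12$, your bound $\abs{A_0}\gg q^{2/3-5/563}$ meets this for large $q$, so the fallback incidence argument is never needed.
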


\begin{proof}
Let $C_{\mathrm g}$ be the absolute constant in \cref{cor:R188}. Let $c_{\mathrm h}>0$ and $C_{\mathrm h}>0$ be constants for which \cref{lem:halfexclude} is valid, after increasing a lower threshold for $q$ if necessary. Choose $c_{\ast}>0$ so small that
\begin{equation}\label{eq:cstar-choice}
C_{\mathrm g}c_{\ast}^{188}\le \frac12,
\qquad
c_{\ast}\le c_{\mathrm h},
\qquad
c_{\ast}\le1.
\end{equation}
We then choose $C_{\ast}\ge \max\{1,C_{\mathrm h}\}$ and finally enlarge $q_{\ast}$ so that all fixed-constant asymptotic inequalities used below hold for $q\ge q_{\ast}$.

Since $n\ge C_{\ast}q^{375/563}$,
\[
Q=\frac qn\le C_{\ast}^{-1}q^{188/563}.
\]
Hence
\[
KQ\le c_{\ast}Q^{189/188}
\ll q^{189/563}=o(q^{1/2}),
\]
so the size hypothesis in \cref{cor:R188} holds once $q\ge q_{\ast}$. If the generic alternative of that corollary held, then by $K\le c_{\ast}Q^{1/188}$ and \eqref{eq:cstar-choice},
\[
Q\le C_{\mathrm g}K^{188}
\le C_{\mathrm g}c_{\ast}^{188}Q
\le \frac Q2,
\]
a contradiction. Therefore the subfield alternative holds.

Moreover,
\[
K\le c_{\ast}Q^{1/188}
\le c_{\ast}q^{1/563}
\le c_{\mathrm h}q^{1/563},
\]
and $C_{\ast}\ge C_{\mathrm h}$, so \cref{lem:cubic} applies. We obtain a subfield $G$ with $\abs G=q^{1/3}$, a two-dimensional $G$-vector space $V$, and $a\in\F_q$ such that
\[
\abs{A\cap(a+V)}\ge c_0 q^{2/3}K^{-5}
\]
for an absolute constant $c_0>0$.

Set $A_0=(A-a)\cap V$. Then $A_0-A_0\subseteq A-A$, and
\[
\abs{A_0}
\ge c_0q^{2/3}K^{-5}
\ge c_0c_{\ast}^{-5}q^{2/3-5/563}.
\]
Since $2/3-5/563>7/12$, after increasing $q_{\ast}$ we have $\abs{A_0}\ge \sqrt2\,q^{7/12}$. By \cite[Theorem~E]{MurphyPetridis},
\[
(A_0-A_0)(A_0-A_0)=VV
\]
and $\abs{VV}>q/2$. Since $(A_0-A_0)(A_0-A_0)\subseteq(A-A)(A-A)$, the proposition follows.
\end{proof}

\section{Proof of the main theorem}\label{sec:mainproof}

\begin{proof}[Proof of \cref{thm:main}]
Let $c_{\ast},C_{\ast},q_{\ast}$ be the constants from \cref{prop:largebranch}. Choose once and for all
\begin{equation}\label{eq:kappa-choice}
0<\kappa\le \min\left\{\frac14,\frac{c_{\ast}}2\right\}.
\end{equation}
Next choose $C\ge C_{\ast}$ so large that
\begin{equation}\label{eq:C-choice}
\kappa C^{563/188}\ge6
\end{equation}
and, if necessary, also
\begin{equation}\label{eq:smallq-vacuous}
Cq^{375/563}>q
\qquad (q<q_{\ast}).
\end{equation}
The latter requirement involves only finitely many $q$, so it can be enforced by increasing $C$. This fixes the constants in the theorem in the order
\[
\text{structural constants}\;\longrightarrow\;\kappa\;\longrightarrow\;C.
\]

Let $A\subseteq\F_q$ satisfy $n:=\abs A\ge Cq^{375/563}$ and put $Q=q/n$. By \eqref{eq:smallq-vacuous}, the hypothesis is impossible when $q<q_{\ast}$, so assume $q\ge q_{\ast}$. The case $n>q^{2/3}$ follows directly from \cite[Theorem~A]{MurphyPetridis}, after enlarging the fixed threshold $q_{\ast}$ if necessary. Hence we may suppose
\begin{equation}\label{eq:main-range}
Cq^{375/563}\le n\le q^{2/3}.
\end{equation}
Define
\begin{equation}\label{eq:Kchoice}
K=\kappa Q^{1/188}.
\end{equation}
Since $n\le q^{2/3}$, we have $Q\ge q^{1/3}$. After enlarging $q_{\ast}$ once more, this implies $K\ge1$. Also $Q\ge1$ and \eqref{eq:kappa-choice} give
\[
4K=4\kappa Q^{1/188}\le Q^{1/188}\le Q,
\]
so $n=q/Q\le q/(4K)$, as required in \cref{lem:MP-B}. Finally, \eqref{eq:kappa-choice} gives $K\le c_{\ast}Q^{1/188}$.

If
\[
D_{\times}(A)\ge \frac{n^8}{q}+\frac{3qn^5}{K},
\]
then all hypotheses of \cref{prop:largebranch} follow from \eqref{eq:main-range}, and therefore
\[
\abs{(A-A)(A-A)}>\frac q2.
\]

It remains to consider the complementary case
\begin{equation}\label{eq:smallD}
D_{\times}(A)< \frac{n^8}{q}+\frac{3qn^5}{K}.
\end{equation}
By Cauchy--Schwarz,
\begin{equation}\label{eq:CSproduct}
\abs{(A-A)(A-A)}\,D_{\times}(A)\ge n^8.
\end{equation}
We claim that our choice of $C$ guarantees
\begin{equation}\label{eq:precise-smallcondition}
K\ge 6\frac{q^2}{n^3}.
\end{equation}
Indeed, using \eqref{eq:Kchoice},
\[
\frac{K}{q^2/n^3}
=
\kappa\frac{n^{3-1/188}}{q^{2-1/188}}.
\]
The right-hand side is increasing in $n$, and at the lower endpoint $n=Cq^{375/563}$ of \eqref{eq:main-range} the powers of $q$ cancel because
\[
\frac{375}{563}=\frac{2-1/188}{3-1/188}.
\]
Consequently
\[
\frac{K}{q^2/n^3}
\ge
\kappa C^{3-1/188}
=
\kappa C^{563/188}
\ge6
\]
by \eqref{eq:C-choice}, proving \eqref{eq:precise-smallcondition}.

It follows that
\[
\frac{3qn^5}{K}
\le
\frac12\frac{n^8}{q}.
\]
Thus \eqref{eq:smallD} yields
\[
D_{\times}(A)<\frac32\frac{n^8}{q}.
\]
Combining this with \eqref{eq:CSproduct} gives the slightly stronger bound
\[
\abs{(A-A)(A-A)}>\frac{2q}{3}>\frac q2.
\]
This completes the proof.
\end{proof}

\paragraph{AI Assistance Statement}

The author used ChatGPT, with GPT-5.6 available through a ChatGPT Plus subscription, as an auxiliary research and writing tool during parts of the exploratory development of this work. The tool was used for brainstorming possible proof strategies, checking intermediate calculations and logical dependencies, organizing arguments, and assisting with the exposition and preparation of the manuscript.

All mathematical statements, proofs, references, and final formulations appearing in this paper were independently reviewed and selected by the author. The author assumes full responsibility for the correctness, originality, and presentation of the results.


\begin{thebibliography}{99}

\bibitem{BennettEtAl}
M. Bennett, D. Hart, A. Iosevich, J. Pakianathan, and M. Rudnev,
\emph{Group actions and geometric combinatorics in $\F_q^d$},
Forum Math. \textbf{29} (2017), no.~1, 91--110, doi:10.1515/forum-2015-0251.

\bibitem{BKT}
J. Bourgain, N. Katz, and T. Tao,
\emph{A sum-product estimate in finite fields, and applications},
Geom. Funct. Anal. \textbf{14} (2004), no.~1, 27--57, doi:10.1007/s00039-004-0451-1.

\bibitem{HIS}
D. Hart, A. Iosevich, and J. Solymosi,
\emph{Sum-product estimates in finite fields via Kloosterman sums},
Int. Math. Res. Not. IMRN (2007), Art. ID rnm007, 14 pp., doi:10.1093/imrn/rnm007.

\bibitem{Mohammadi}
A. Mohammadi,
\emph{Szemer\'edi--Trotter type results in arbitrary finite fields},
Integers \textbf{20} (2020), Paper No. A7, 29 pp., doi:10.5281/zenodo.10717435.

\bibitem{MPRRS}
B. Murphy, G. Petridis, O. Roche--Newton, M. Rudnev, and I. D. Shkredov,
\emph{New results on sum--product type growth over fields},
Mathematika \textbf{65} (2019), no.~3, 588--642, doi:10.1112/S0025579319000044.

\bibitem{MurphyPetridisSurvey}
B. Murphy and G. Petridis,
\emph{A second wave of expanders over finite fields},
in \emph{Combinatorial and Additive Number Theory II}, Springer Proc. Math. Stat. \textbf{220}, Springer, Cham, 2018, 215--238, doi:10.1007/978-3-319-68032-3\_15.

\bibitem{MurphyPetridis}
B. Murphy and G. Petridis,
\emph{Products of differences over arbitrary finite fields},
Discrete Analysis (2018), Paper No.~18, 42 pp., doi:10.19086/da.5098.

\bibitem{PetridisPrime}
G. Petridis,
\emph{Products of differences in prime order finite fields},
preprint (2016), arXiv:1602.02142.

\bibitem{ReiherSchoen}
C. Reiher and T. Schoen,
\emph{Note on the theorem of Balog, Szemer\'edi, and Gowers},
Combinatorica \textbf{44} (2024), 691--698, doi:10.1007/s00493-024-00092-5.

\bibitem{RocheNewton}
O. Roche--Newton,
\emph{Sum-ratio estimates over arbitrary finite fields},
preprint (2014), arXiv:1407.1654.

\bibitem{Vinh}
L. A. Vinh,
\emph{The Szemer\'edi--Trotter type theorem and the sum-product estimate in finite fields},
European J. Combin. \textbf{32} (2011), no.~8, 1177--1181, doi:10.1016/j.ejc.2011.06.008.

\end{thebibliography}
\end{document}